\newtheorem{theorem}{Theorem}[section]
\newtheorem{remark}[theorem]{Remark}
\newtheorem{proposition}[theorem]{Proposition}
\newproof{pf}{Proof}
\newcommand{\RR}{{\if mm {\rm I}\mkern -3mu{\rm R}\else \leavevmode
\hbox{I}\kern -.17em\hbox{R} \fi}}
\newcommand{\bu}{\mbox{\boldmath{$u$}}}
\newcommand{\bcero}{\mbox{\boldmath{$0$}}}
\newcommand{\bvarphi}{\mbox{\boldmath{$\varphi$}}}
\newcommand{\bV}{\mbox{\boldmath{$V$}}}
\newcommand{\bx}{\mbox{\boldmath{$\bx$}}}
\newcommand{\by}{\mbox{\boldmath{$\by$}}}
\newcommand{\bs}{\mbox{\boldmath{$s$}}}
\newcommand{\var}{\varepsilon}
\renewcommand{\by}{\mbox{\boldmath{$y$}}}
\renewcommand{\bx}{\mbox{\boldmath{$x$}}}
\newcommand{\be}{\mbox{\boldmath{$e$}}}
\newcommand{\bn}{\mbox{\boldmath{$n$}}}
\newcommand{\bz}{\mathbf{z}}
\renewcommand{\d}{\partial}
\newcommand{\pfrac}[2]{\frac{\d #1}{\d #2}}
\newcommand{\dpc}{\frac{\d p^0}{\d s_1}}
\newcommand{\dpu}{\frac{\d p^1}{\d s_1}}
\newcommand{\dpcd}{\frac{\d^2 p^0}{\d s_1^2}}
\newcommand{\dpud}{\frac{\d^2 p^1}{\d s_1^2}}
\newcommand{\dR}{\frac{\d R}{\d s_1}}
\newcommand{\rhonu}{\rho_0\nu}
\newcommand{\bbv}{\mathbf{v}}
\newcommand{\bc}{\mathbf{c}}
\newcommand{\bW}{\mathbf{W}}
\newcommand{\bphi}{\mbox{\boldmath{$\varphi$}}}
\newcommand{\bchi}{\mbox{\boldmath{$\chi$}}}
\journal{arXiv.org}
\begin{document}

\begin{frontmatter}

\title{Asymptotic Analysis of a Viscous Fluid in a Curved Pipe with Elastic Walls}

\author[compostela]{G.~Casti\~neira}
\ead{gonzalo.castineira@usc.es}
\author[corunha]{J. M. Rodr\'{\i}guez}
\ead{jose.rodriguez.seijo@udc.es}

\address[compostela]{Departamento de Matem\'atica Aplicada,
Univ. de Santiago de Compostela, Spain}
\address[corunha]{Departamento de M\'etodos Matem\'aticos
e Representaci\'on, Univ. da Coru\~na, Spain}

\begin{abstract}
This communication is devoted to the presentation of our recent results regarding
the asymptotic analysis of a viscous flow in a tube with elastic walls. This study can be applied, 
for example, to the blood flow in an artery. With this aim, we consider the dynamic problem of the incompressible flow of a viscous fluid through a curved pipe with a smooth central curve. Our analysis leads to obtain an one dimensional model via singular perturbation of the Navier-Stokes system as $\varepsilon$, a non dimensional parameter related to the radius of cross-section of the tube, tends to zero. We allow the radius depend on tangential direction and time, so a coupling with an elastic or viscoelastic law on the wall of the pipe is possible.

To perform the asymptotic analysis, we do a change of variable to a reference domain where we assume the existence of asymptotic expansions on $\varepsilon$ for both velocity and pressure which, upon substitution on Navier-Stokes equations, leads to the characterization 
of various terms of the expansion. This allows us to obtain an approximation of the solution of 
the Navier-Stokes equations.
\end{abstract}

\begin{keyword}
Asymptotic Analysis\sep Blood flow \sep Navier-Stokes equations.

\end{keyword}

\end{frontmatter}

\section{Introduction}
Last decades, applied mathematics have been involved in some new fields where they had not been applied before. One of these fields is biomedicine, from which new methods to improve the diagnosis and treatment of different diseases are demanded. In particular, in the case of cardiovascular problems, modeling the blood flow in veins and arteries is a difficult problem.

A large number of articles have studied the flow of a viscous fluid through a pipe. For example, in 
\cite{Hydon,Paloka_helical,Smith} the flow behavior inside the pipe is related with the curvature and torsion of its middle line. In \cite{Hydon} the main term of the asymptotic expansion of the solution is compared with a Poiseuille flow inside a pipe with rigid walls. 
In \cite{Panasenko}, the same
problem but with visco-elastic walls is considered, leading to a 
fluid-structure problem.
In \cite{Lyne} the secondary flow is studied, the boundary layer in \cite{Riley}, 
both depending on values of Dean number. More recently, the non-steady case in tube structures, has been considered in \cite{Panasenko_trans,Panasenko_trans2}, where estimates of the error between exact solution and the asymptotic approximation are proved.

 There are also articles where the flow in blood vessels is modeled. An one dimensional model is presented in \cite{Quarteroni}, where 
  clinical procedures where this model can be useful are highlighted. Another model for blood flow in arteries is developed in \cite{Pedley}, relating blood pulse and flow patterns, and remarking how this kind of models can help with the design of treatments for particular diseases.

In this article, we shall follow the spirits of \cite{Paloka}, where asymptotic analysis is used to find a model for a steady flow through a curved pipe with rigid walls. We shall consider, instead, an unsteady flow and elastic walls. The structure of this article is the following: in section \ref{sec-2} we shall describe the problem in a reference domain, in section \ref{sec-3} we shall suppose the existence of an asymptotic expansion of the solution and we shall identify the first terms of this expansion, in section \ref{sec-4} we shall show some examples of the tangential and transversal velocity, and finally, we shall present some conclusions in section \ref{sec-5}.

\section{Setting the problem in a reference domain} \label{sec-2}

Let us suppose that central curve of the pipe is parametrized by $\mathbf{c}(s)$, 
where $s \in [0, L]$ is the arc-length parameter, and the interior points of the 
pipe are given by 
$$(x,y,z) = \mathbf{c}(s) + 
\varepsilon\, r\, R(t,s)\left [ (\cos \theta) \textbf{N}(s) \\ + (\sin \theta)  \textbf{B}(s) \right ],$$ 
where $r \in [0,1]$, $\theta \in [0, 2\pi]$, $\{ \textbf{T} = \mathbf{c}', \textbf{N}, \textbf{B} \}$ is 
the Frenet-Serret frame of $\mathbf{c}$, and 
$\varepsilon R(t,s)$ is the radius of the cross-section of 
the pipe at point $\mathbf{c}(s)$ and time $t$. The non dimensional parameter $\varepsilon$ 
represents the different scale of magnitude between the pipe diameter and its length, so we shall assume that $\varepsilon << 1$.

Let us introduce the following notation, $s_1:=s, s_2:=\theta , s_3:=r $ for the variables, and $\{  \textbf{v}_1:= \textbf{T}, \textbf{v}_2:=\textbf{N},\textbf{ v}_3:=\textbf{B} \}$, for the Frenet-Serret frame of $\mathbf{c}$. This new notation will allow us to use Einstein summation convention in what follows.

Let be the subsets of $\mathbb{R}^3$ defined by $\Omega^\varepsilon=[0,L]\times[0,2\pi]\times[0,\varepsilon]$ and $\Omega=[0,L]\times[0,2\pi]\times[0,1]$. We define the maps 
$\phi_1^{\varepsilon}:\Omega \rightarrow \Omega^{\varepsilon}$, $\phi_2^{\varepsilon}:\Omega^{\varepsilon} \rightarrow \hat{\Omega}_t^\varepsilon$, 
where $\phi_1^{\varepsilon}$ and $\phi_2^{\varepsilon}$ are 
given by the expressions, 
\begin{equation} \label{eq-phi-1-2}
\left.\begin{array}{l}
\phi_1^{\varepsilon}(s_1, s_2, s_3)=(s_1, s_2, \varepsilon s_3)=:(s_1^{\varepsilon}, s_2^{\varepsilon}, s_3^{\varepsilon}),\\[.8em]
\phi_2^{\varepsilon}(s_1^{\varepsilon}, s_2^{\varepsilon}, s_3^{\varepsilon})= \mathbf{c}(s_1^{\varepsilon}) +
 s_3^{\varepsilon} R(t,s_1^{\varepsilon})[ (\cos s_2^{\varepsilon}) \textbf{v}_2(s_1^{\varepsilon}) + (\sin s_2^{\varepsilon}) \textbf{v}_3(s_1^{\varepsilon})  ],
\end{array}\right.
\end{equation}
and $\hat{\Omega}_t^\varepsilon = \phi_2^{\varepsilon} \left ( \phi_1^{\varepsilon} 
\left ( \Omega \right ) \right )$ represents the interior points of the pipe.

We can then introduce the change of variable from the reference domain $\Omega$, $$ \displaystyle \phi^\varepsilon=\left( \phi_2^{\varepsilon}\circ \phi_1^{\varepsilon} \right): \Omega \rightarrow \hat{\Omega}_t^\varepsilon,$$
\begin{equation}\label{aplicacion}
\left.\begin{array}{l}
 \phi^\varepsilon(s_1, s_2, s_3)= \mathbf{c}(s_1) + \varepsilon s_3 R(t,s_1)[ (\cos s_2) \textbf{v}_2(s_1) 
 + (\sin s_2)\textbf{v}_3(s_1)]=:(x_1^{\varepsilon}, x_2^{\varepsilon}, x_3^{\varepsilon}).
\end{array}\right.
\end{equation}

Let us consider the incompressible Navier-Stokes equations in the domain 
$\hat{\Omega}_t^\varepsilon$ given by,
\begin{eqnarray}
&&\frac{\partial \mathbf{u}^\varepsilon}{\partial t} +  (\nabla \mathbf{u}^\varepsilon) \mathbf{u}^\varepsilon = \frac{1}{\rho_0} \mathrm{div}\, \mathbf{T}^{\varepsilon} + \mathbf{b}_0^{\varepsilon},  \\
&&\mathrm{div}\, \mathbf{u}^\varepsilon = 0,
\end{eqnarray}
where $\mathbf{u}^\varepsilon$ stands for the velocity field, 
$\mathbf{b}_0^{\varepsilon}$ is the density of body forces and $\mathbf{T}^{\varepsilon}$ is the stress tensor given by
 $$\mathbf{T}^{\varepsilon}= -p^\varepsilon\mathbf{I} + 2\mu \mathbf{\Sigma}^\varepsilon,$$
where $p^\varepsilon$ is the pressure field, $\mu$ the dynamic viscosity and 
$\mathbf{\Sigma^\varepsilon} = \frac{1}{2} \left( \nabla \textbf{u}^\varepsilon + (\nabla \textbf{u}^\varepsilon)^T\right)$. Let $\nu= \mu/\rho_0$ be the kinematic viscosity, 
so we can write these equations,
\begin{eqnarray} \label{NS_eps1}
&&\frac{\partial \mathbf{u}^\varepsilon}{\partial t} +  (\nabla \mathbf{u}^\varepsilon) \mathbf{u}^\varepsilon + \frac{1}{\rho_0}\nabla p^{\varepsilon} -\nu \Delta \mathbf{u}^{\varepsilon} 
= \mathbf{b}_0^{\varepsilon},  \\\label{NS_eps2}
&&\mathrm{div}\, \mathbf{u}^\varepsilon = 0.
\end{eqnarray}

We shall consider continuity between the fluid and the wall of the pipe displacements. Let us suppose that only radial displacements of the wall are allowed. Then the boundary condition at the interface of the fluid and the wall of the pipe can be expressed as
\begin{equation}
\label{bc-1}
\mathbf{u}^{\varepsilon}=\left(\varepsilon \frac{\partial R}{\partial t} \right)\mathbf{n}^\varepsilon 
\ \textrm{at}\  s_3^\varepsilon=\varepsilon,
\end{equation}
where $\mathbf{n}^\varepsilon$ is the outward unitary normal at $s_3^\varepsilon=\varepsilon$.

 The next step is to write the equations of the problem in the reference domain $\Omega$. Taking 
 into account the change of variable (\ref{aplicacion}), we can associate to each vector field 
 $\mathbf{w}^{\varepsilon}$ in $\hat{\Omega}_t^\varepsilon$, a new vector field 
 $\mathbf{w}(\varepsilon)$ defined in $\Omega$, as follows
 \begin{align} \label{trans_dominioreferencia}
w_i^{\varepsilon}=\mathbf{w}^{\varepsilon}\cdot \mathbf{e}_i= (w_k^{\varepsilon} \mathbf{e}_k)\cdot \mathbf{e}_i= (w_k(\varepsilon) \mathbf{v}_k)\cdot \mathbf{e}_i=:w_k(\varepsilon) v_{ki},
 \end{align}
where $\{ \mathbf{e}_1, \mathbf{e}_2, \mathbf{e}_3 \}$ is an orthonormal basis, we are using 
the Einstein summation convention (where latin indices indicate sum from 1 to 3), and 
we denote $v_{ki}:=\mathbf{v}_k\cdot \mathbf{e}_i$.


\subsection{Computing the Jacobian of the inverse mapping of the change of variable}

As first step, we shall need to study the inverse mapping of the change of variable (\ref{aplicacion}), in particular, of its Jacobian, which terms will be needed to write Navier-Stokes equations in the reference domain. Let us consider the mapping:
\begin{align}
& \tilde{\phi}^\varepsilon :[0,T]\times\Omega\longrightarrow[0,T]\times  \hat{\Omega}_t^\varepsilon\\
& \tilde{\phi}^\varepsilon(t,s_1,s_2,s_3):=(t^\varepsilon,x^\varepsilon)=(t^\varepsilon, \phi^\varepsilon(s_1,s_2,s_3)),
\end{align}
hence, the associated Jacobian denoted by $J_\phi$ is
\[
J_\phi=\begin{pmatrix}
 \pfrac{t^\var}{t}   & \pfrac{t^\var}{s_1}  & \pfrac{t^\var}{s_2}   & \pfrac{t^\var}{s_3} \\
 &&& \\
 \pfrac{x_1^\var}{t} & \pfrac{x_1^\var}{s_1}& \pfrac{x_1^\var}{s_2} & \pfrac{x_1^\var}{s_3} \\
 &&& \\
 \pfrac{x_2^\var}{t} & \pfrac{x_2^\var}{s_1}& \pfrac{x_2^\var}{s_2} & \pfrac{x_2^\var}{s_3} \\
 &&& \\
 \pfrac{x_3^\var}{t} & \pfrac{x_3^\var}{s_1}& \pfrac{x_3^\var}{s_2} & \pfrac{x_3^\var}{s_3}
\end{pmatrix} = \begin{pmatrix}
 \pfrac{t^\var}{t}   & \pfrac{t^\var}{s_1}  & \pfrac{t^\var}{s_2}   & \pfrac{t^\var}{s_3} \\ 
 &&& \\
 \displaystyle\pfrac{\bx^\var}{t} & &\displaystyle \nabla_{\mathbf{s}}\bx^\var &  
\end{pmatrix},
\]
where $\mathbf{s}=(s_1,s_2,s_3)$. Since
\begin{align*} 
t^\var&=t, \\
\bx^\var&= \mathbf{c}(s_1)+ \var s_3 R(t,s_1)((\cos s_2) \bbv_2(s_1) +( \sin s_2) \bbv_3(s_1) ), 
\end{align*}
then,
\begin{align*}
\pfrac{t^\var}{t}&=1,\\
\pfrac{t^\var}{s_i}&=0, \\
\pfrac{\bx^\var}{t}&=\var s_3 \pfrac{R}{t} ( (\cos s_2) \bbv_2(s_1) + (\sin s_2) \bbv_3(s_1)), \\
\pfrac{\bx^\var}{s_1}&=\mathbf{c}'(s_1) + \var s_3 \pfrac{R}{s_1}((\cos s_2) \bbv_2(s_1) + (\sin s_2) \bbv_3(s_1)) + \var s_3 R((\cos s_2) \bbv_2'(s_1)+ (\sin s_2) \bbv_3'(s_1)), \\
\pfrac{\bx^\var}{s_2}&= \var s_3 R (-(\sin s_2) \bbv_2(s_1)+( \cos s_2) \bbv_3 (s_1)),\\
\pfrac{\bx^\var}{s_3}&= \var R ((\cos s_2) \bbv_2(s_1)+ (\sin s_2) \bbv_3 (s_1)).
\end{align*}

The inverse mapping $(\tilde{\phi}^\varepsilon)^{-1} :[0,T]\times  \hat{\Omega}_t^\varepsilon \longrightarrow [0,T]\times\Omega$ is such that its Jacobian, denoted by $J_\phi^{-1}$, is
\[
J_\phi^{-1}=\begin{pmatrix}
 \pfrac{t}{t^\var}   & \pfrac{t}{x_1^\var}  & \pfrac{t}{x_2^\var}   & \pfrac{t}{x_3^\var} \\
 \pfrac{s_1}{t^\var} & \pfrac{s_1}{x_1^\var}& \pfrac{s_1}{x_2^\var} & \pfrac{s_1}{x_3^\var} \\
 \pfrac{s_2}{t^\var} & \pfrac{s_2}{x_1^\var}& \pfrac{s_2}{x_2^\var} & \pfrac{s_2}{x_3^\var} \\
 \pfrac{s_3}{t^\var} & \pfrac{s_3}{x_1^\var}& \pfrac{s_3}{x_2^\var} & \pfrac{s_3}{x_3^\var}
\end{pmatrix} = \begin{pmatrix} 
 1   & 0  & 0  & 0 \\
 &&& \\
 \displaystyle\pfrac{\mathbf{s}}{t^\var} & &\displaystyle \nabla_{\bx^\var}\mathbf{s} &  
\end{pmatrix}.
\]

Therefore, since $J_\phi J_\phi^{-1}= \mathbf{I}$, we find the following relations,
\begin{align} \label{relacion_gradientes}
\nabla_{\bx^\var} \mathbf{s}&= (\nabla_{\mathbf{s}}\bx^\var)^{-1}, \\\label{relacion_der_temporal}
\pfrac{\bs}{t^\var}&= -(\nabla_{\bx^\var} \mathbf{s}) \pfrac{\bx^\var}{t}.
\end{align}

In order to compute  $\displaystyle \pfrac{s_i}{\bx^\var}$,  let us write 
$\displaystyle \pfrac{s_i}{ \bx^\var} = \alpha_i \bbv_1 + \beta_i \bbv_2 + \gamma_i \bbv_3$ . 

We know, by Frenet-Serret formulas, that the following equalities hold 
\begin{align} \label{Frenet}
\left\{\begin{aligned}[c]
\bbv_1'(s_1)&= \kappa(s_1)\bbv_2(s_1),\\
\bbv_2'(s_1)&=- \kappa(s_1)\bbv_1(s_1)+ \tau(s_1) \bbv_3(s_1),\\
\bbv_3'(s_1)&=-\tau(s_1) \bbv_2(s_1),
\end{aligned}\right.\qquad
\end{align}
where the functions $\kappa$ and $\tau$ denote the curvature and torsion of the middle line of the curved pipe. 
Now, by (\ref{relacion_gradientes}), we have   that
\begin{align} \label{producto_igual_delta}
\pfrac{s_i}{ \bx^\var}\cdot \pfrac{ \bx^\varepsilon}{s_j}=\delta_{ij},
\end{align}
where $\delta_{ij}$ is the Kronecker's delta.  For $i=1$ we find that
\begin{align} \nonumber
1&=\left(\alpha_1\bbv_1+ \beta_1\bbv_2+ \gamma_1\bbv _3\right) \cdot 
\bigg(\bbv_1 + \var s_3 \pfrac{R}{s_1} (\cos s_2 \bbv_2 + \sin s_2 \bbv_3) \\ \nonumber
&\quad  {} + \var s_3 R (\cos s_2 \bbv_2' + \sin s_2 \bbv_3')    \bigg)\\ \nonumber
&= \alpha_1 \left( 1 + \var s_3 R \left(\cos s_2(\bbv_2'\cdot\bbv_1) + \sin s_2 (\bbv_3'\cdot \bbv_1)\right)\right)  \\ \nonumber
&\quad {} + \beta_1 \var s_3 \left( \pfrac{R}{s_1} \cos s_2 + R\sin s_2 (\bbv_3'\cdot\bbv_2)\right) 
\\ \label{et3}
&\quad {} + \gamma_1 \var s_3 \left( \pfrac{R}{s_1}\sin s_2 + R\cos s_2( \bbv_2' \cdot\bbv_3)\right), 
\end{align}
since $\displaystyle \bbv_1=\bc'$, and  
\begin{align}\nonumber
0&=\left(\alpha_1\bbv_1+ \beta_1\bbv_2+ \gamma_1\bbv _3\right)\cdot\left(\var s_3 R(-\sin s_2 \bbv_2 + \cos s_2 \bbv_3) \right)
\\ \label{et1}
&=-\beta_1\var s_3R\sin s_2 + \gamma_1 \var s_3 R \cos s_2,
\\ \nonumber
0&=\left(\alpha_1\bbv_1+ \beta_1\bbv_2+ \gamma_1\bbv _3\right)\cdot\left(\var  R(\cos s_2 \bbv_2 + \sin s_2 \bbv_3) \right) 
\\ \label{et2}
&=\beta_1\var R\cos s_2 + \gamma_1 \var  R \sin s_2.
\end{align}

From (\ref{et1})--(\ref{et2}) is easy to check that $\beta_1=\gamma_1=0$. Hence, from (\ref{et3}) and (\ref{Frenet}), we obtain that
\begin{align}
\alpha_1=\frac{1}{1-\var \kappa(s_1) s_3 R(t,s_1)\cos s_2 }
\end{align}
Now, for $i=2$ in (\ref{producto_igual_delta}) we find, on one hand, that
\begin{align} \nonumber
0&= \alpha_2 \left( 1 + \var s_3 R \left(\cos s_2(\bbv_2'\cdot\bbv_1) + \sin s_2 (\bbv_3'\cdot \bbv_1)\right)\right) \\ \nonumber 
&\quad {} + \beta_2 \var s_3 \left( \pfrac{R}{s_1} \cos s_2 + R\sin s_2 (\bbv_3'\cdot\bbv_2)\right) \\ \label{et4}
&\quad {} + \gamma_2 \var s_3 \left( \pfrac{R}{s_1}\sin s_2 + R\cos s_2( \bbv_2' \cdot\bbv_3)\right), 
\end{align}
and, on the other hand, that
\begin{align}\nonumber
1&=-\beta_2\var s_3R\sin s_2 + \gamma_2 \var s_3 R \cos s_2
\\ \nonumber
0&=\beta_2\var R\cos s_2 + \gamma_2 \var  R \sin s_2.
\end{align}
where we deduce that,
\begin{align}
\beta_2= - \frac{\sin s_2}{\var s_3 R(t,s_1)}, \quad \gamma_2=\frac{\cos s_2}{\var s_3 R(t,s_1)}.  
\end{align}
 Therefore, from (\ref{Frenet}) and (\ref{et4}), we obtain that
 \begin{align}
 \alpha_2=-\frac{\tau(s_1)}{1 -\var \kappa(s_1) s_3 R(t,s_1)\cos s_2 }.
 \end{align}
Finally, for $i=3$ in (\ref{producto_igual_delta}) we find, on one hand, that
\begin{align} \nonumber
0&= \alpha_3 \left( 1 + \var s_3 R \left(\cos s_2(\bbv_2'\cdot\bbv_1) + \sin s_2 (\bbv_3'\cdot \bbv_1)\right)\right) \\ \nonumber
&\quad {} + \beta_3 \var s_3 \left( \pfrac{R}{s_1} \cos s_2 + R\sin s_2 (\bbv_3'\cdot\bbv_2)\right) \\ \label{et5}
&\quad {} + \gamma_3 \var s_3 \left( \pfrac{R}{s_1}\sin s_2 + R\cos s_2( \bbv_2'\cdot \bbv_3)\right), 
\end{align}
and, on the other hand, that
\begin{align}\nonumber
0&=-\beta_3\var s_3R\sin s_2 + \gamma_3 \var s_3 R \cos s_2
\\ \nonumber
1&=\beta_3\var R\cos s_2 + \gamma_3 \var  R \sin s_2.
\end{align}
where we deduce that,
\begin{align}
\beta_3=  \frac{\cos s_2}{\var  R(t,s_1)}, \quad \gamma_3=\frac{\sin s_2}{\var  R(t,s_1)}.  
\end{align}
 Therefore, from (\ref{Frenet}) and (\ref{et5}), we obtain that
  \begin{align}
  \alpha_3=-\frac{s_3}{R(t,s_1)\left(1-\var \kappa(s_1) s_3 R(t,s_1)\cos s_2 \right)}\pfrac{R}{s_1}(t,s_1).
  \end{align}
To sum up, we have obtained that  
\begin{align}\label{et12}
\pfrac{s_1}{ \bx^\var}&= \frac{1}{1-\var \kappa(s_1) s_3 R(t,s_1)\cos s_2 } \bbv_1(s_1),
\\\nonumber 
\pfrac{s_2}{ \bx^\var}&= -\frac{\tau(s_1)}{1-\var \kappa(s_1) s_3 R(t,s_1)\cos s_2 } \bbv_1(s_1) - \frac{\sin s_2}{\var s_3 R(t,s_1)}\bbv_2(s_1)
 \\ \label{et13}
&\quad {} + \frac{\cos s_2}{\var s_3 R(t,s_1)}\bbv_3(s_1),
\\ \nonumber
\pfrac{s_3}{ \bx^\var}&=-\frac{s_3}{R(t,s_1)\left(1-\var \kappa(s_1) s_3 R(t,s_1)\cos s_2 \right)}\pfrac{R}{s_1}(t,s_1)\bbv_1(s_1) \\ \label{et14}
&\quad {} + \frac{\cos s_2}{\var  R(t,s_1)}\bbv_2(s_1) 
+ \frac{\sin s_2}{\var  R(t,s_1)}\bbv_3(s_1),
\end{align}
and from the relation found in (\ref{relacion_der_temporal}), we deduce that
\begin{align}\label{ds_dt}
\pfrac{\bs}{t^\var}=- \frac{s_3}{R(t,s_1)}\pfrac{R}{t}(t,s_1)\bbv_3(s_1).
\end{align}

\subsection{Writing Navier-Stokes equations into the reference domain}

We are now in conditions to find the expressions of the fields in (\ref{NS_eps1})--(\ref{NS_eps2}) in the reference domain. Firstly, from (\ref{trans_dominioreferencia}), the chain rule and (\ref{ds_dt}),  we find that
\begin{align*}
\pfrac{u_i^\var}{t^\var}&=\pfrac{\bu^\var}{t^\var} \cdot \be_i= \pfrac{(u_k^\var\be_k)}{t^\var}\cdot\be_i= \pfrac{(u_k(\var)\bbv_k)}{t^\var}\cdot\be_i= \left( \pfrac{\left ( u_k(\var) \bbv_k \right )}{t}+ \pfrac{\left ( u_k(\var) \bbv_k \right )}{s_3}\pfrac{s_3}{t^\var}\right) \cdot \be_i
\\
&= \left(\pfrac{u_k(\var)}{t} - \frac{s_3}{R}\pfrac{R}{t}\pfrac{u_k(\var)}{s_3}  \right) (\bbv_k\cdot\be_i)= (D_t u_k) v_{ki},
\end{align*}
where $D_t$ is the operator defined by

\begin{align}
D_t:=\left(\pfrac{}{t} - \frac{s_3}{R}\pfrac{R}{t}\pfrac{}{s_3}\right).
\end{align}

The components of the non-linear term in (\ref{NS_eps1}), 
from (\ref{trans_dominioreferencia}) and the chain rule, can be written as follows,
\begin{align}\nonumber
\pfrac{u_i^\var}{x_j^\var}u_j^\var&= \pfrac{(\bu^\var\cdot\be_i)}{x_j^\var}(\bu^\var\cdot\be_j)= \pfrac{((u_k^\var\be_k)\cdot\be_i)}{x_j^\var}((u_m^\var\be_m)\cdot\be_j)
\\
&= \pfrac{((u_k(\var)\bbv_k)\cdot\be_i)}{x_j^\var}((u_m(\var)\bbv_m)\cdot\be_j)= \left( \pfrac{(u_k(\var) v_{ki})}{s_q} \pfrac{s_q}{x_j^\var}  \right) (u_m(\var)v_{mj}).
\end{align}

The Laplacian term in (\ref{NS_eps1}), from (\ref{trans_dominioreferencia}) and the chain rule, leads to
\begin{align}\nonumber
\Delta u_i^\var&= \Delta (\bu^\var \cdot \be_i)= \Delta (u_k^\var\be_k)\cdot \be_i=\Delta (u_k(\var)\bbv_k)\cdot \be_i= \pfrac{^2}{(x_j^\var)^2} \left(u_k(\var)v_{ki} \right)
\\
&= \pfrac{}{x_j^\var}\left( \pfrac{(u_k(\var) v_{ki})}{s_q}\pfrac{s_q}{x_j^\var} \right)=\pfrac{}{s_m}\left( \pfrac{(u_k(\var) v_{ki})}{s_q}\pfrac{s_q}{x_j^\var} \right)\pfrac{s_m}{x_j^\var}.
\end{align}

In the same way we obtain the components of the pressure gradient and of the volume forces as follows,
\begin{align*}
\pfrac{p}{x_i^\var}&= \pfrac{p}{s_q}\pfrac{s_q}{x_i^\var},
\\
(\mathbf{b}_0^\var)_i&= \mathbf{b}_0^\var\cdot \be_i= ((\mathbf{b}_0^\var)_k \be_k) \cdot \be_i= ((\mathbf{b}_0(\var))_k \bbv_k) \cdot \be_i=b_{0k}(\var)v_{ki},
\end{align*} 
where $b_{0k}(\var):= (\mathbf{b}_0(\var))_k$.

Finally, the incompressibility equation (\ref{NS_eps2}) in the reference domain, using (\ref{trans_dominioreferencia}) and the chain rule, has the following expression,
\begin{align*}
\mathrm{div} \, \bu^\var&= \pfrac{u_j^\var}{x_j^\var}= \pfrac{(\bu^\var\cdot \be_j)}{x_j^\var}= \pfrac{((u^\var_k\be_k)\cdot \be_j)}{x_j^\var}=\pfrac{((u_k(\var)\bbv_k)\cdot \be_j)}{x_j^\var}
\\&= \pfrac{(u_k(\var)v_{kj})}{s_q}\pfrac{s_q}{x_j^\var}.
\end{align*}

With these considerations, the incompressible Navier-Stokes equations in the reference domain 
can be written  as
\begin{align}
\nonumber
 &\displaystyle D_t(u_k(\varepsilon)v_{ki}) + \left( \frac{\partial(u_k(\varepsilon)v_{ki}) }{\partial s_q} \frac{\partial s_q}{\partial x_j^{\varepsilon}}\right) (u_m(\varepsilon)v_{mj}) \label{NavierStokesref1} 
 \\ \displaystyle 
 &\quad {} - \nu \frac{\partial}{\partial s_m}\left( \frac{\partial(u_k(\varepsilon)v_{ki}) }{\partial s_q} \frac{\partial s_q}{\partial x_j^{\varepsilon}}   \right) \frac{\partial s_m}{\partial x_j^{\varepsilon}} = - \frac{1}{\rho_0}\frac{\partial p(\varepsilon)}{\partial s_q} \frac{\partial s_q}{\partial x_i^{\varepsilon}} + b_{0k}(\varepsilon)v_{ki}, 
 \\ \displaystyle
 &\frac{\partial }{\partial s_q}(u_k(\varepsilon)v_{kj})\frac{\partial s_q}{\partial x_j^{\varepsilon}}=0. \label{NavierStokesref2}
 \end{align}

Let $\bn^\var= (\cos s_2) \bbv_2(s_1)+ (\sin s_2) \bbv_3(s_1)$ the outward unit normal vector at $s_3=1$. Then, from the boundary condition (\ref{bc-1}) at $s_3^\varepsilon=\varepsilon$, we have that
\begin{align}
\bu^\var=u_i^\var\be_i= u_i(\var)\bbv_i= \var \pfrac{R}{t}\left( (\cos s_2) \bbv_2(s_1)+ (\sin s_2) \bbv_3(s_1) \right),
\end{align}
hence, we obtain the following boundary conditions for the scaled components of velocity:
\begin{eqnarray}\label{bc_ref}
\left\{
\begin{array}{lll}
\label{cond1}
& u_1(\varepsilon)=0 &\ \mathrm{at} \ s_3=1,
\\[.8em] \displaystyle \label{cond2} & \displaystyle u_2(\varepsilon)=\varepsilon\frac{\partial R}{\partial t} \cos s_2 &\ \mathrm{at} \ s_3=1,\\[.8em] 
\label{cond3}
&\displaystyle u_3(\varepsilon)= \varepsilon\frac{\partial R}{\partial t} \sin s_2 &\ \mathrm{at} \ s_3=1.
\end{array} 
\right.
\end{eqnarray}

\section{Asymptotic expansion of the solution} \label{sec-3} \setcounter{equation}{0}
 
\subsection{Expansion of the solution on powers of $\var$}
 
Following \cite{Paloka}, 
we assume that the solution of (\ref{NavierStokesref1})-(\ref{NavierStokesref2}) admits a formal expansion on powers of $\varepsilon$, so the components of velocity and pressure fields can be written,
 \begin{align}\label{velocidad}
 u_k(\varepsilon)&= u_k^0+ \varepsilon u_k^1 + \varepsilon^2 u_k^2 +...  \\ \label{presion}
 p(\varepsilon)&= \frac{1}{\varepsilon^2}p^0 + \frac{1}{\varepsilon}p^1 + p^2 +... 
 \end{align}
 
We must remark that this assumption implies, as we shall see later, that the pressure gradient determines the velocity field. Other assumptions can be considered by choosing different order of 
$\var$ in the pressure and velocity fields in (\ref{velocidad})-(\ref{presion}), leading to different conclusions, but we consider that this is the most interesting case.

Substituting (\ref{velocidad})-(\ref{presion}) in the boundary conditions in the reference domain (see (\ref{bc_ref})), we obtain the following boundary conditions for the terms of the asymptotic expansion,
\begin{eqnarray} \label{bc_terms}
\left\{
\begin{array}{lll}
&\displaystyle u_1^k=0, \ k\geq 0, &\ \mathrm{at} \ s_3=1,  \\[.8em]
& u_2^0=u_3^0=0 &\ \mathrm{at} \ s_3=1,
\\[.8em] \displaystyle  & \displaystyle u_2^1=\frac{\partial R}{\partial t} \cos s_2, \quad u_3^1=\frac{\partial R}{\partial t} \sin s_2 &\ \mathrm{at} \ s_3=1,\\[.8em] 
&\displaystyle u_\alpha^k=0, \ k\geq 2, \ \alpha=2,3 &\ \mathrm{at} \ s_3=1.
\end{array} 
\right.
\end{eqnarray}

We need to write (\ref{et12})--(\ref{et14}) as expansions of $\var$. If we remark that
\begin{align*}
\frac{1}{a+\var b}= c_0 + c_1\var + c_2\var^2 + ...
\end{align*}
where $a,b\in\mathbb{R}$, such that $a\neq0$, then is easy to check that 
\begin{align*}
c_k= (-1)^k \frac{b^k}{a^{k+1}}, \ k\geq 0.
\end{align*}
Therefore, with $a=1$ and $b=-\kappa s_3 R\cos s_2 $ in (\ref{et12})--(\ref{et14}), we find that 
\begin{align}  \label{dsq_dx_j}
\pfrac{s_q}{ \bx^\var}\cdot \be_j=\pfrac{s_q}{ x^\var_j}&=\frac{1}{\var}d_{-1j}^q + d_{0j}^q + \var d_{1j}^q + \var^2 d_{2j}^q + ...
\end{align}
with $q=1,2,3$ and where,
\begin{align}
d_{-1j}^1&=0 , &d_{kj}^1&= (-1)^k \frac{b^k}{a^{k+1}}v_{1j}, \\
d_{-1j}^2&=- \frac{\sin s_2}{R s_3} v_{2j} + \frac{\cos s_2}{R s_3} v_{3j}, &d_{kj}^2&= (-1)^k \frac{(-\tau) b^k}{a^{k+1}}v_{1j}, \\
d_{-1j}^3&= \frac{\cos s_2}{R } v_{2j} + \frac{\sin s_2}{R } v_{3j}, &d_{kj}^3&= (-1)^{k+1} \frac{s_3}{R}\pfrac{R}{s_1} \frac{b^k}{a^{k+1}}v_{1j},
\end{align}
with $k\ge0.$ 

Also, we assume that the applied forces admit an asymptotic expansion of the form
\begin{align*}
b_{0k}(\var)= b_{0k}^0 + \var b_{0k}^1+ \var^2 b_{0k}^2+...
\end{align*}

We substitute (\ref{velocidad})-(\ref{presion})  into the equations (\ref{NavierStokesref1})-(\ref{NavierStokesref2}) and use (\ref{dsq_dx_j}). 
 Hence, we obtain the incompressible Navier-Stokes equations in the reference domain 
 in powers of $\var$:
\begin{align}
\nonumber
 &\displaystyle D_t((u_k^0+ \varepsilon u_k^1 + \varepsilon^2 u_k^2 +...)v_{ki}) 
 \\ \nonumber
 &\qquad {} + \left( \frac{\partial((u_k^0+ \varepsilon u_k^1 + \varepsilon^2 u_k^2 +...)v_{ki}) }{\partial s_q} \left(\frac{1}{\var}d_{-1j}^q + d_{0j}^q + \var d_{1j}^q + \var^2 d_{2j}^q + ...\right)\right) \left((u_m^0+ \varepsilon u_m^1 \right.
 \\ \displaystyle \nonumber
 & \qquad \left. {} + \varepsilon^2 u_m^2 +...)v_{mj}\right) - \nu \frac{\partial}{\partial s_m}\left( \frac{\partial((u_k^0+ \varepsilon u_k^1 + \varepsilon^2 u_k^2 +...)v_{ki}) }{\partial s_q}
  \left(\frac{1}{\var}d_{-1j}^q + d_{0j}^q + \var d_{1j}^q  \right. \right.
  \\ \nonumber
  & \qquad \left.  \left. {} + \var^2 d_{2j}^q + ...\right)   \right) \left(\frac{1}{\var}d_{-1j}^m + d_{0j}^m + \var d_{1j}^m + \var^2 d_{2j}^m + ...\right)
 \\ \nonumber
  &\quad = - \frac{1}{\rho_0}\frac{\partial (\frac{1}{\varepsilon^2}p^0 + \frac{1}{\varepsilon}p^1 + p^2 +...)}{\partial s_q} \left(\frac{1}{\var}d_{-1i}^q + d_{0i}^q + \var d_{1i}^q + \var^2 d_{2i}^q + ...\right) 
  \\ \label{NavierStokes_asymp} 
  &\qquad {} + \left(b_{0k}^0 + \var b_{0k}^1+ \var^2 b_{0k}^2+...\right)v_{ki}, 
 \\ \displaystyle \label{NavierStokes_asymp2}
 &\frac{\partial }{\partial s_q}((u_k^0+ \varepsilon u_k^1 + \varepsilon^2 u_k^2 +...)v_{kj})\left(\frac{1}{\var}d_{-1j}^q + d_{0j}^q + \var d_{1j}^q + \var^2 d_{2j}^q + ...\right)=0. 
 \end{align}

\subsection{Asymptotic expansion of the flow}

In the next proposition we shall show some conditions satisfied by the asymptotic expansion of the flow, that will be used later to characterize the terms of the asymptotic expansion of the solution. Firstly, let $Q^\var(t,s)$ denotes the flow at position $s_1=s$ and at time $t$, defined in the original domain by 
\begin{align}\label{flow}
Q^\varepsilon(t,s):=\int_{s_1=s}\mathbf{u^\varepsilon}\cdot \bbv_1\, dA.
\end{align}
where $\displaystyle \int_{s_1=s} \phi\, dA$ represents the surface integral of $\phi$ on 
the transversal section of $\hat{\Omega}_t^\varepsilon$ at $s_1=s$.

Using the mapping (\ref{aplicacion}), we define the flow in the reference domain by $Q(\var)$, that is
\begin{align} \label{flow_eps2}
Q^\varepsilon(t,s_1)=\var^2 Q(\var)(t, s_1),
\end{align}
 where
\begin{align} \label{flow_ref}
 Q(\var)=R^2\int_{0}^{2\pi}\int_{0}^{1} s_3 u_1(\var)\, ds_3 ds_2.
\end{align}

We also define the cross-sectional area in the original domain by
\begin{align}\label{area_eps}
A^\var:=\var^2A_0= \var^2 \pi R^2,
\end{align}
where $A_0$ denotes the cross-sectional area in the reference domain, $A_0=\pi R^2$.

\begin{proposition}
Let us consider a fluid inside the curved pipe $\hat{\Omega}_t^\varepsilon$, which movement is described by the incompressible Navier-Stokes equations (\ref{NS_eps1})--(\ref{NS_eps2}) with the boundary condition (\ref{bc-1}). Let us assume that there exists an asymptotic expansion of the form (\ref{velocidad})--(\ref{presion}) of the problem in the reference domain.  Then there exists an asymptotic expansion for the scaled flow in the reference domain of the form
\begin{align*}
Q(\varepsilon) = Q^0 + \varepsilon Q^1 + \varepsilon^2Q^2+...
\end{align*}
where the term $Q^k$ is defined by
\begin{align}\label{flow_k}
Q^k= R^2\int_{0}^{2\pi}\int_{0}^{1} s_3 u_1^k\, ds_3 ds_2.
\end{align}

Moreover, the following relations hold:
\begin{align}\label{relaciones_flow}
\frac{\d Q^0}{\d s_1} + \frac{\d A^0}{\d t}=0,
\qquad
\frac{\d Q^k}{\d s_1}=0 \quad (k\geq 1).
\end{align}
\end{proposition}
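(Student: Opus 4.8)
The plan is to obtain the expansion of $Q(\varepsilon)$ by a direct term-by-term integration, and then to derive the two relations in (\ref{relaciones_flow}) from a single exact one-dimensional mass balance satisfied by the physical flow $Q^\varepsilon$, which is afterwards rescaled and separated into powers of $\varepsilon$. For the first part I would simply insert the assumed velocity expansion (\ref{velocidad}) of $u_1(\varepsilon)$ into the definition (\ref{flow_ref}) of the scaled flow and integrate term by term over the cross-section $[0,2\pi]\times[0,1]$, which in the reference configuration no longer depends on $\varepsilon$; this gives
\begin{align*}
Q(\varepsilon)=\sum_{k\geq 0}\varepsilon^{k}\,R^{2}\int_{0}^{2\pi}\int_{0}^{1}s_3\,u_1^{k}\,ds_3\,ds_2,
\end{align*}
an expansion in powers of $\varepsilon$ whose $k$-th coefficient is exactly the $Q^k$ of (\ref{flow_k}), settling the existence statement.

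The core of the argument is the mass balance. I would fix two sections $s_1=a$ and $s_1=b$ and integrate the incompressibility equation (\ref{NS_eps2}) over the slice $S_{ab}$ of $\hat{\Omega}_t^\varepsilon$ between them. By the divergence theorem the integral of $\mathrm{div}\,\mathbf{u}^\varepsilon$ over $S_{ab}$ equals the outward flux of $\mathbf{u}^\varepsilon$ through $\partial S_{ab}$, which splits into the two flat end caps, disks normal to $\bbv_1$ that contribute $Q^\varepsilon(t,b)-Q^\varepsilon(t,a)$, and the lateral wall. On the wall the kinematic condition (\ref{bc-1}) gives $\mathbf{u}^\varepsilon\cdot\mathbf{n}^\varepsilon=\varepsilon\,\partial R/\partial t$, so the wall is a material surface and the flux across it equals the rate of growth of the volume of $S_{ab}$. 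Using the change of variable (\ref{aplicacion}), whose volume element is $\varepsilon^{2}s_3R^{2}(1-\varepsilon\kappa s_3R\cos s_2)\,ds_1\,ds_2\,ds_3$, that volume equals $\int_a^b A^\varepsilon\,ds_1$ because the curvature term integrates to zero over $s_2\in[0,2\pi]$; hence the wall flux is $\int_a^b(\partial A^\varepsilon/\partial t)\,ds_1$. Substituting this into the identity $Q^\varepsilon(t,b)-Q^\varepsilon(t,a)+(\text{wall flux})=0$ and letting $a,b$ vary yields the exact balance $\frac{\partial Q^\varepsilon}{\partial s_1}+\frac{\partial A^\varepsilon}{\partial t}=0$.

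Finally I would insert the scalings $Q^\varepsilon=\varepsilon^{2}Q(\varepsilon)$ from (\ref{flow_eps2}) and $A^\varepsilon=\varepsilon^{2}\pi R^{2}$ from (\ref{area_eps}) into this balance, cancel the common factor $\varepsilon^{2}$, and substitute $Q(\varepsilon)=\sum_k\varepsilon^k Q^k$. Since $A^0:=\pi R^{2}$ carries no power of $\varepsilon$, the coefficient of $\varepsilon^{0}$ reads $\frac{\partial Q^0}{\partial s_1}+\frac{\partial A^0}{\partial t}=0$ and the coefficient of each $\varepsilon^{k}$ with $k\geq 1$ reads $\frac{\partial Q^k}{\partial s_1}=0$, which are precisely (\ref{relaciones_flow}). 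I expect the geometric step of the mass balance to be the main obstacle: one must match the physical flux and cross-sectional area with the scaled quantities $Q(\varepsilon)$ and $A_0$ correctly, and in particular check that the curvature contribution to the Jacobian cancels under the $s_2$-integration, so that the balance holds exactly and not merely to leading order in $\varepsilon$. A self-contained alternative that stays in the reference domain is to multiply the incompressibility equation (\ref{NavierStokesref2}) by the Jacobian weight and integrate over the cross-section, recognizing a Piola-type total-divergence structure and using the wall conditions (\ref{bc_ref}) at $s_3=1$ together with regularity on the axis $s_3=0$; the bookkeeping is heavier but leads to the same identity.
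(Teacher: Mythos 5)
Your proposal is correct and follows the same overall skeleton as the paper's proof: term-by-term integration of (\ref{velocidad}) in (\ref{flow_ref}) for the existence of the expansion, the divergence theorem applied to a slice of $\hat{\Omega}_t^\varepsilon$ between $s_1=a$ and $s_1=b$ with the end caps producing $Q^\varepsilon(t,b)-Q^\varepsilon(t,a)$, passage to the pointwise balance $\frac{\partial Q^\varepsilon}{\partial s_1}+\frac{\partial A^\varepsilon}{\partial t}=0$, rescaling by $\varepsilon^{-2}$ via (\ref{flow_eps2}) and (\ref{area_eps}), and identification of powers of $\varepsilon$. The one genuine difference is how the lateral-wall flux is evaluated. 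The paper computes it pointwise: it writes $\mathbf{u}^\varepsilon\cdot\mathbf{n}^\varepsilon=\varepsilon\,\partial R/\partial t$ on the wall and integrates against the area element $\varepsilon R\,ds_2\,ds_1$, which tacitly neglects both the axial tilt of the true outward normal (present when $\partial R/\partial s_1\neq 0$) and the curvature correction to the wall area element. You instead use the kinematic condition (\ref{bc-1}) to identify the wall flux with the rate of change of the enclosed volume (a Reynolds-transport argument, noting that the end caps have zero normal velocity), and you compute that volume exactly from the Jacobian $\varepsilon^2 s_3R^2\left(1-\varepsilon\kappa s_3R\cos s_2\right)$ of (\ref{aplicacion}), observing that the curvature term cancels under the $s_2$-integration. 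Your variant is exact where the paper's pointwise computation is, strictly speaking, approximate, and this care is relevant here because the relations (\ref{relaciones_flow}) are asserted at every order in $\varepsilon$, not merely at leading order; the price is the extra geometric bookkeeping (Jacobian and transport theorem), which you have handled correctly.
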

\begin{proof}
Let $\tilde{\Omega}^\epsilon_t$ be a portion of the original domain $\hat{\Omega}_t^\varepsilon$ between $s_1=a$ and $s_1=b$ $(a<b)$. From $(\ref{NS_eps2})$ and the Gauss Theorem, we deduce that
\begin{align}\nonumber
0&=\int_{\tilde{\Omega}^\epsilon_t} \mathrm{div} \, \mathbf{u}^\varepsilon\, dV=\int_{\d\tilde{\Omega}^\epsilon_t} \mathbf{u^\varepsilon}\cdot \mathbf{n^\varepsilon}\, dA 
 \\ \label{et6}
 &=\int_{s_1=a} \mathbf{u^\varepsilon}\cdot \mathbf{n^\varepsilon}\, dA+ \int_{s_1=b} \mathbf{u^\varepsilon}\cdot \mathbf{n^\varepsilon}\, dA +\int_{s_3^\varepsilon=\varepsilon R(t,s_1)} \mathbf{u^\varepsilon}\cdot \mathbf{n^\varepsilon}\, dA.
\end{align}

At the beginning and end of  $\tilde{\Omega}^\epsilon_t$ we have that
\begin{align*}
\mathbf{u^\varepsilon}\cdot \mathbf{n^\varepsilon}&= (u_k(\varepsilon)\mathbf{v}_k)\cdot (-\mathbf{v}_1)=-u_1(\varepsilon) \ &\textrm{at} \ s_1=a, \\
\mathbf{u^\varepsilon}\cdot \mathbf{n^\varepsilon}&= (u_k(\varepsilon)\mathbf{v}_k)\cdot (\mathbf{v}_1)=u_1(\varepsilon) \ &\textrm{at} \ s_1=b.
\end{align*}

Therefore, from (\ref{flow}) and (\ref{et6}) we obtain 
\begin{align}\label{et7}
0=-Q^\varepsilon(t, a) + Q^\varepsilon(t, b) + \int_{s_3^\varepsilon=\varepsilon R(t,s_1)} \mathbf{u^\varepsilon}\cdot \mathbf{n^\varepsilon}\, dA.
\end{align}

At $s_3^\varepsilon=\varepsilon R$, we must consider continuity between the fluid and the wall of the pipe displacements (see (\ref{bc-1})), hence
\begin{align*}
\mathbf{u^\varepsilon}\cdot \mathbf{n^\varepsilon}= \frac{\d}{\d t}[ \mathbf{c}(s_1) + \varepsilon R(t,s_1) (\cos s_2^\varepsilon \mathbf{v}_2(s_1) + \sin s_2^\varepsilon \mathbf{v}_3(s_1))  ] \cdot \mathbf{n}^\varepsilon = \varepsilon \frac{\d R}{\d t},
\end{align*}
and then,
\begin{align*}
\int_{s_3^\varepsilon=\varepsilon R(t,s_1)} \mathbf{u^\varepsilon}\cdot \mathbf{n^\varepsilon}\, dA&= \int_{s_1=a}^{s_1=b}\int_{s_2=0}^{s_2=2\pi} \varepsilon^2 R\frac{\d R}{\d t}\,  ds_2ds_1
= \int_{a}^{b} 2\pi \varepsilon^2 R \frac{\d R}{\d t}\, ds_1.
\end{align*}

Substituting this in (\ref{et7}) and dividing the expression by $b-a$, we obtain that 
\begin{align*}
0=\frac{Q^\varepsilon(t,b)-Q^\varepsilon(t,a)}{b-a} + \frac{1}{b-a}\int_{a}^{b} 2\pi \varepsilon^2 R \frac{\d R}{\d t}\, ds_1. 
\end{align*}

Taking the limit when $b$ tends to $a$ and using (\ref{area_eps}), we obtain the following relation,
\begin{align*}
0=\frac{\d Q^\varepsilon}{\d s_1} + 2\pi\varepsilon^2 R \frac{\d R}{\d t}= \frac{\d Q^\varepsilon}{\d s_1} + \frac{\d A^\varepsilon}{\d t}. 
\end{align*}

Now, since $A^0=A^\varepsilon/\varepsilon^2=\pi R^2$ and 
$Q(\varepsilon)=Q^\varepsilon/\varepsilon^2$, we deduce that
\begin{align}\label{et8}
\frac{\d Q(\varepsilon)}{\d s_1} + \frac{\d A^0}{\d t}=0.
\end{align}

On the other hand, taking into account the expansion for $u_1(\varepsilon)$ in (\ref{velocidad}) and 
(\ref{flow_ref}), we can deduce that there exists an asymptotic expansion of the form
\begin{align}\label{desarrollo_flow}
Q(\varepsilon) = Q^0 + \varepsilon Q^1 + \varepsilon^2Q^2+...
\end{align}
where 
\begin{align*}
Q^k= R^2\int_{0}^{2\pi}\int_{0}^{1} s_3 u_1^k\, ds_3 ds_2.
\end{align*}

Finally, upon substitution of (\ref{desarrollo_flow}) in (\ref{et8}), we conclude that 
\begin{align*}
\frac{\d Q^0}{\d s_1} + \frac{\d A^0}{\d t}=0,
\qquad
\frac{\d Q^k}{\d s_1}=0 \quad (k\geq 1).
\end{align*}
\end{proof}

\begin{remark}
The previous result is a direct consequence of the law of conservation of mass. Similar results can be found in previous works, for instance, see \cite{Quarteroni}.
\end{remark}

\subsection{Identifying the terms of the asymptotic expansion of the solution}

In order to identify some of the terms of the asymptotic expansion proposed in (\ref{velocidad})-(\ref{presion}), we shall group the terms multiplied by the same power of $\varepsilon$ in (\ref{NavierStokes_asymp})--(\ref{NavierStokes_asymp2}), obtaining in this way new equations, easier than the original one, that can be solved to identify the mentioned terms of the asymptotic expansion. With this aim, we shall recall a result from \cite{Temam} (Theorem 2.4), that will be used in the following.

\begin{theorem}\label{Teman}
Let $\Omega$ be an open bounded set of class $\mathcal{C}^2$ in $\mathbb{R}^n$ and $\Gamma=\d\Omega$. Let there be given $\mathbf{f}\in H^{-1}(\Omega), g\in L^2(\Omega), \bvarphi\in H^{1/2}(\Gamma)$, such that
\begin{align*}
\int_{\Omega}g\, d\mathbf{x}=\int_{\Gamma}\bvarphi\cdot \mathbf{n}\, d\Gamma,
\end{align*}

Then there exists $\mathbf{u}\in H^1(\Omega), p\in L^2(\Omega)$, which are solutions of the Stokes problem 
\begin{equation*}
\left \{ \begin{array}{l}
-\nu\Delta\mathbf{u} + \nabla p=\mathbf{f} \ \textrm{in} \ \Omega,
\\ \mathrm{div}\, \mathbf{u}=g \ \textrm{in} \ \Omega,
\\\displaystyle 
\mathbf{u} = \bvarphi \quad \textrm{on $\Gamma$}.
\end{array} \right .
\end{equation*}
$\mathbf{u}$ is unique and $p$ is unique up to the addition of a constant.
\end{theorem}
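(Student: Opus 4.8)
The plan is to reduce the inhomogeneous Stokes problem to a homogeneous, divergence-free one to which the Lax--Milgram lemma applies, and then to recover the pressure by a de~Rham-type argument. First I would absorb the boundary data and the prescribed divergence into a fixed lifting: I would construct a field $\mathbf{u}_0\in H^1(\Omega)$ with $\mathrm{div}\,\mathbf{u}_0=g$ in $\Omega$ and $\mathbf{u}_0=\bvarphi$ on $\Gamma$. Writing $\mathbf{u}=\mathbf{u}_0+\mathbf{v}$, the unknown $\mathbf{v}$ then lies in $V:=\{\mathbf{v}\in H^1_0(\Omega):\mathrm{div}\,\mathbf{v}=0\}$ and must satisfy $-\nu\Delta\mathbf{v}+\nabla p=\mathbf{f}+\nu\Delta\mathbf{u}_0=:\mathbf{f}_0\in H^{-1}(\Omega)$, so the problem is transferred to a homogeneous one with a modified right-hand side.

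Constructing the lifting is where the compatibility hypothesis and the regularity of $\Omega$ enter, and I expect this to be the main obstacle. By the trace theorem there is $\mathbf{w}\in H^1(\Omega)$ with $\mathbf{w}=\bvarphi$ on $\Gamma$; then $\mathrm{div}\,\mathbf{w}\in L^2(\Omega)$ and, by the divergence theorem, $\int_\Omega(g-\mathrm{div}\,\mathbf{w})\,d\mathbf{x}=\int_\Omega g\,d\mathbf{x}-\int_\Gamma\bvarphi\cdot\mathbf{n}\,d\Gamma=0$ precisely because of the assumed compatibility condition. Thus $g-\mathrm{div}\,\mathbf{w}$ has zero mean, and the hard analytic point is the surjectivity of the divergence from $H^1_0(\Omega)$ onto the space of $L^2$ functions of zero mean: one solves $\mathrm{div}\,\mathbf{z}=g-\mathrm{div}\,\mathbf{w}$ with $\mathbf{z}\in H^1_0(\Omega)$ (for instance via the Bogovski\u{\i} operator, whose construction uses the $\mathcal{C}^2$, or at least Lipschitz, regularity of $\Omega$). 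Setting $\mathbf{u}_0:=\mathbf{w}+\mathbf{z}$ then yields the desired lifting.

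With the reduction in place, I would pose the variational problem on the Hilbert space $V$: find $\mathbf{v}\in V$ such that $\nu\int_\Omega\nabla\mathbf{v}:\nabla\mathbf{w}\,d\mathbf{x}=\langle\mathbf{f}_0,\mathbf{w}\rangle$ for all $\mathbf{w}\in V$. The bilinear form is clearly continuous, and it is coercive on $V$ by the Poincar\'e inequality (since $V\subset H^1_0(\Omega)$, the seminorm $\|\nabla\cdot\|_{L^2}$ controls the full $H^1$ norm), so the Lax--Milgram lemma yields a unique $\mathbf{v}\in V$. Together with the lifting, this produces the velocity $\mathbf{u}=\mathbf{u}_0+\mathbf{v}\in H^1(\Omega)$ satisfying the divergence constraint and the boundary condition.

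It remains to recover the pressure. The linear functional $\mathbf{w}\mapsto\langle\mathbf{f}_0,\mathbf{w}\rangle-\nu\int_\Omega\nabla\mathbf{v}:\nabla\mathbf{w}\,d\mathbf{x}$ vanishes on all of $V$, i.e.\ on divergence-free test fields; by de~Rham's theorem (equivalently, the characterization of distributions annihilating solenoidal fields as gradients), there exists $p\in L^2(\Omega)$ with $-\nu\Delta\mathbf{v}+\nabla p=\mathbf{f}_0$ in $\mathcal{D}'(\Omega)$, which is the first Stokes equation. Since $\Omega$ is connected, $\nabla p=0$ forces $p$ constant, giving uniqueness of $p$ up to an additive constant. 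Finally, uniqueness of $\mathbf{u}$ follows because the difference of two solutions lies in $V$ and is annihilated by the coercive form, hence vanishes.
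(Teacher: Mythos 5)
The paper does not prove this theorem at all: it is recalled verbatim from Temam's monograph (\cite{Temam}, Chapter~I, Theorem~2.4), so there is no internal proof to compare yours against. Your argument is correct and is essentially the classical proof given in that reference: lift the boundary datum and the prescribed divergence, solve the resulting homogeneous problem on the solenoidal subspace $V$ by Lax--Milgram, recover the pressure from the functional that annihilates $V$, and obtain uniqueness by an energy argument. Two refinements are worth noting. First, de~Rham's theorem by itself only produces a \emph{distributional} pressure; to conclude $p\in L^2(\Omega)$ you should argue via the closed-range (Ne\v{c}as) property of the gradient, which is dual to the surjectivity of $\mathrm{div}:H^1_0(\Omega)^n\to L^2_0(\Omega)$ (zero-mean $L^2$ functions) that you already invoke for the lifting: since your functional vanishes on all of $V$, not merely on smooth compactly supported solenoidal fields, it factors through $\mathrm{div}$ and one reads off $p\in L^2(\Omega)$ directly, with no extra machinery. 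Second, your lifting (trace extension plus the Bogovski\u{\i} operator) needs only Lipschitz regularity of $\Omega$, whereas Temam's own lifting solves an auxiliary Neumann problem and then corrects the tangential trace, which is where the $\mathcal{C}^2$ hypothesis is actually used; either route is legitimate, yours being the more economical in its assumptions on the domain.
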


Let us introduce the local cartesian 
coordinates of the cross section of the pipe at $s_1$, as the points $\mathbf{z}\in\mathbb{R}^2$ defined by
\begin{align}\label{def_z}
\mathbf{z}= (z_2, z_3) = (s_3 \cos s_2, s_3 \sin s_2),
\end{align}  
and let $\omega=\{(z_2,z_3) \in \mathbb{R}^2 / z_2^2+z_3^2 < 1 \}$. We can prove now the following theorem, where the first terms of the asymptotic expansion are identified.

\begin{theorem}\label{terminos_identificados}
Let us assume that there exists an asymptotic expansion of the form (\ref{velocidad})-(\ref{presion}). Then:
\begin{enumerate}[label={{(\roman*)}}, leftmargin=0em ,itemindent=3em]
\item The term of order zero of velocity, $\mathbf{u}^0$, verifies 
\begin{align}\label{u_1^0}
  u_1^0&= \frac{R^2}{4\rho_0 \nu} \frac{\partial p^0}{\partial s_1} (s_3^2 -1), \\ \label{U^0}
  u_2^0&=u_3^0=0,
\end{align}
while zeroth order term of pressure, $p^0$, is the solution of the problem,
\begin{equation}\label{p^0}
 \frac{\d}{\d s_1} \left( R^4 \frac{\d p^0}{\d s_1} \right) = 16 \nu \rho_0 R \frac{\d R}{\d t},
\end{equation}
with suitable boundary conditions.
\item The components of the first order term of velocity, $\mathbf{u}^1$, are 
  \begin{align}
  \label{u^1_1}
   u^1_1&= \left [ \frac{3R^3\kappa s_3 \cos s_2}{16 \nu \rho_0} \frac{\d p^0}{\d s_1}  
  + \frac{R^2}{4 \nu \rho_0}\frac{\d p^1}{\d s_1}\right ](s_3^2- 1), \\
 u^1_2&= \frac{s_3 R}{16 \rho_0 \nu} \left[ 2 \frac{\d}{\d s_1} \left (R^2 \frac{\d p^0}{\d s_1} \right ) -R^2s_3^2 \frac{\d^2 p^0}{\d s_1^2} \right] \cos s_2,\\ 
   u^1_3&= \frac{s_3 R}{16 \rho_0 \nu} \left[ 2 \frac{\d}{\d s_1} \left (R^2 \frac{\d p^0}{\d s_1} \right ) -R^2s_3^2 \frac{\d^2 p^0}{\d s_1^2} \right]  \sin s_2.
  \end{align}
The first order term of pressure, $p^1$, is the solution of the  problem, 
\begin{equation}\label{edp_p1}
  \frac{\d}{\d s_1} \left(R^4 \frac{\d p^1}{\d s_1}\right)=0,
\end{equation}
with the appropriate boundary conditions.

\item The first component of the second order term of velocity, $\mathbf{u}^2$, is given by 
\begin{align}\label{u_1^2}\displaystyle  \nonumber
u_1^2&= \frac{R^2}{16} \left[   \frac{R^2}{4 \rho_0 \nu^2} \frac{\d^2 p^0}{\d t\d s_1 } - \frac{R^4}{16\rho_0^2\nu^3}\frac{\d p^0}{\d s_1}\frac{\d^2p^0}{\d s_1^2}
- \frac{R^2}{2 \rho_0 \nu}\frac{\d^3 p^0}{\d s_1 ^3} 
 + \frac{11 \kappa^2 R^2}{8 \rho_0 \nu}\frac{\d p^0}{\d s_1} \right] (s_3^4 - 1) 
  \\&\displaystyle \qquad \nonumber
{} + \frac{R^2}{4}  \left[-\frac1{4\rho_0 \nu^2}\frac{\d}{\d t}\left(R^2 \frac{\d p^0}{\d s_1}  \right) 
+ \frac{R^2}{16 \rho_0^2 \nu^3}\frac{\d p^0}{\d s_1} \frac{\d}{\d s_1} \left( R^2 \frac{\d p^0}{\d s_1}  \right) 
\right.\\& \displaystyle \left. \qquad \nonumber
{} + \frac1{4 \rho_0 \nu} \frac{\d^2}{\d s_1^2} \left(R^2 \frac{\d p^0}{\d s_1}\right)
 -\frac{7 \kappa^2 R^2}{16 \rho_0 \nu}\frac{\d p^0}{\d s_1} +\frac1{\rho_0 \nu} \frac{\d p^2_0}{\d s_1} - \frac{b_{01}}{\nu}\right] (s_3^2-1 ) 
 \\&\displaystyle \qquad \nonumber
 {}  + \frac{R^6}{1152 \rho_0^2 \nu^3} \frac{\d p^0}{\d s_1}\frac{\d^2 p^0}{\d s_1^2}(s_3^6-1)
+ \frac{3\kappa R^3 }{16 \rho_0 \nu}\frac{\d p^1}{\d s_1} (s_3^3- s_3)\cos s_2 
 \\& \displaystyle \qquad
{} + \frac{5 \kappa^2 R^4}{64 \rho_0 \nu} \frac{\d p^0}{\d s_1} (s_3^4- s_3^2)\cos(2 s_2),
 \end{align}
and the second order term of pressure, $p^2$,  is
\begin{equation} \label{et-p2}
 p^2 = - \frac{R^2}{4} \frac{\d ^2 p^0}{\d s_1^2}s_3^2 + p_0^2(t,s_1),
  \end{equation}
where $p_0^2(t,s_1)$ is the solution, with the adequate boundary conditions, of the problem
\begin{align} \nonumber
\displaystyle \frac{\d}{\d s_1} \left(R^4 \frac{\d p_0^2}{\d s_1}\right) &= \frac{\d}{\d s_1} \left[ - \frac{3 R^8}{64 \rho_0 \nu^2}\frac{\d p^0}{\d s_1}\frac{\d^2 p^0}{\d s_1^2}
 - \frac{R^6}{12} \frac{\d^3 p^0}{\d s_1^3} \right. - \frac{\kappa^2 R^6}{48} \frac{\d p^0}{\d s_1} +\frac{R^5}{2\nu} \frac{\d R}{\d t} \frac{\d p^0}{\d s_1} 
\\& \displaystyle  \qquad \nonumber
 {} - \frac{R^7}{8 \rho_0 \nu^2} \frac{\d R}{\d s_1} \left(\frac{\d p^0}{\d s_1}\right)^2 -\frac{R^4}{2}\left(\frac{\d R}{\d s_1}\right)^2\frac{\d p^0}{\d s_1} 
  \left.
 -\frac{R^5}{2}\frac{\d^2 R}{\d s_1^2}\frac{\d p^0}{\d s_1} 
  \right.\\& \displaystyle \left. \qquad \label{eq-dif-p02}
{} - R^5 \frac{\d R}{\d s_1}\frac{\d^2 p^0}{\d s_1^2} +  \frac{R^6}{6\nu} \frac{\d^2p^0}{\d t\d s_1}   + R^4 \rho_0 b_{01} \right].
 \end{align}

Let us consider the local cartesian 
coordinates at cross section of the pipe at $s_1$, defined by $\bz= (z_2, z_3) = (s_3 \cos s_2, s_3 \sin s_2)$ 
(and then, $(s_3, s_2)$ are the local polar coordinates at the same cross section). Let be 
$\mathbf{U}^2 = (u^2_2, u^2_3)$. Then $(\mathbf{U}^2, p^3 )$ is the solution of the following problem 
\begin{equation}
\label{eqU2}
\left \{ \begin{array}{lcl}
\displaystyle \Delta_z \mathbf{U}^2 = \frac{R}{\rho_0 \nu}\nabla_z p^3 + \mathbf{F} &\textrm{in} &\omega, 
\\  \displaystyle \mathrm{div}_{\bz}\, \mathbf{U}^2 = g &\textrm{in} &\omega, 
\\  \displaystyle 
\mathbf{U}^2 = \mathbf{0} &\textrm{on} &\d \omega, 
\end{array} \right .
\end{equation}
where $\omega = \{ (z_2, z_3) / z_2^2 + z_3^2 < 1 \}$ and the fields $g$ and $\mathbf{F}$ are defined, respectively, by
\begin{align}\nonumber
&g= \left(  - \frac{\kappa R^4}{2 \rho_0 \nu}\pfrac{^2p^0}{s_1^2} - \frac{3 \kappa' R^4}{16\rho_0 \nu}\pfrac{p^0}{s_1} \right)z_2\left( z_2^2 + z_3^2\right)  -\frac{3\kappa\tau R^4}{16\rho_0\nu}\pfrac{p^0}{s_1}z_3\left( z_2^2 + z_3^2\right) 
\\ \nonumber
&\qquad {} + \left(  \frac{9\kappa R^3}{8 \rho_0 \nu}\pfrac{R}{s_1}\pfrac{p^0}{s_1} + \frac{9 \kappa R^4}{16 \rho_0 \nu} \pfrac{^2p^0}{s_1^2} + \frac{3 \kappa' R^4}{16 \rho_0 \nu} \pfrac{p^0}{s_1}             \right)z_2
\\ \label{funcion_g}
& \qquad {} + \frac{3\kappa\tau R^4}{16\rho_0\nu}\pfrac{p^0}{s_1} z_3 -\frac{R^3}{4\rho_0\nu}\pfrac{^2p^1}{s_1^2}\left( z_2^2 + z_3^2\right) + \frac{R}{4\rho_0\nu}\pfrac{}{s_1}\left( R^2 \pfrac{p^1}{s_1} \right),
\end{align}
\begin{align}  \nonumber
\mathbf{F}&= \left( \frac{\kappa R^6}{16 \rho_0^2 \nu^3} \left(\frac{\d p^0}{\d s_1}\right)^2 \left( (z_2^2 + z_3^2)^2 +1 \right)   + \frac{\kappa' R^4}{4 \rho_0 \nu}\frac{\d p^0}{\d s_1} + \frac{5 R^2 \kappa}{8 \rho_0 \nu} \frac{\d}{\d s_1}\left(R^2 \frac{\d p^0}{\d s_1} \right) \right.
 \\ \nonumber
 & \qquad {} + \left(- \frac{\kappa R^6}{8 \rho^2_0 \nu^3} \left(\frac{\d p^0}{\d s_1}  \right)^2 - \frac{9R^4 \kappa}{16 \rho_0 \nu}\frac{\d^2 p^0}{\d s_1^2} - \frac{\kappa'R^4}{4\rho_0 \nu}\frac{\d p^0}{\d s_1} \right){(z_2^2 + z_3^2)} -\frac{\kappa R^4 }{8 \rho_0 \nu} \frac{\d^2 p^0}{\d s_1^2}z_2^2  
\\ 
&\qquad \left.
 {} - \frac{R^2}{\nu}b_{02}, -\frac{\kappa\tau R^4}{4 \rho_0 \nu} \frac{\d p^0}{\d s_1} (z_2^2 + z_3^2 -1) -\frac{2R^4\kappa}{16 \rho_0\nu}\frac{\d^2p^0}{\d s_1^2}z_2z_3 
   -\frac{R^2}{\nu}b_{03}   \right). \label{funcion_F}
\end{align}

Problem (\ref{eqU2}) has a unique solution $(\mathbf{U}^2,p^3)$, with $\mathbf{U}^2$ unique and $p^3$ unique up to a function depending on $t$ and $s_1$, that can be computed explicitly (see appendix \ref{apend.U2}).
\end{enumerate}
\end{theorem}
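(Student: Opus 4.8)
The plan is to determine the unknowns order by order: substitute the velocity and pressure expansions (\ref{velocidad})--(\ref{presion}) and the expansion (\ref{dsq_dx_j}) of the inverse Jacobian into the scaled system (\ref{NavierStokesref1})--(\ref{NavierStokesref2}) rewritten in powers of $\var$ as (\ref{NavierStokes_asymp})--(\ref{NavierStokes_asymp2}), collect the coefficient of each power of $\var$, and solve the resulting cross-sectional boundary value problems with the scaled data (\ref{bc_terms}). The crucial preliminary is a power count: because the viscous term carries two factors $\d s_q/\d x_j^\var$, each of order $\var^{-1}$, while the pressure is inflated by $\var^{-2}$, the lowest order appearing in the momentum equation is $\var^{-3}$, and there (the convective and temporal contributions being higher order, and $d_{-1j}^1=0$) only the transverse pressure gradient survives. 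Since the $\var^{-1}$ parts of $\d s_2/\d\bx^\var$ and $\d s_3/\d\bx^\var$ in (\ref{dsq_dx_j}) are proportional to the mutually orthogonal fields $-(\sin s_2)\bbv_2+(\cos s_2)\bbv_3$ and $(\cos s_2)\bbv_2+(\sin s_2)\bbv_3$, the $\var^{-3}$ balance forces $\d p^0/\d s_2=\d p^0/\d s_3=0$; I would record this first, together with the analogous fact $p^1=p^1(t,s_1)$ one order higher, since the independence of the leading pressures from the cross-sectional variables is what makes the scheme close.

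For part (i) I would collect the coefficient of $\var^{-2}$ in (\ref{NavierStokes_asymp}). Using $d_{-1j}^1=0$, the surviving viscous operator is exactly the rescaled transverse Laplacian $R^{-2}\Delta_{\bz}$ acting componentwise on $u_k^0\bbv_k$ in the Frenet frame, balanced against $-\rho_0^{-1}(\d p^0/\d s_1)\bbv_1$. Projecting on $\bbv_2,\bbv_3$ yields homogeneous Poisson problems with zero Dirichlet data, so $u_2^0=u_3^0=0$ as in (\ref{U^0}); projecting on $\bbv_1$ yields a Poisson equation on the unit disk whose right-hand side is constant in $\bz$, whose solution is the Poiseuille profile (\ref{u_1^0}). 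Inserting that profile into the definition (\ref{flow_k}) of $Q^0$ and performing the elementary angular and radial integrations gives $Q^0=-\frac{\pi R^4}{8\rho_0\nu}\frac{\d p^0}{\d s_1}$; substituting into the conservation law $\d Q^0/\d s_1+\d A^0/\d t=0$ of (\ref{relaciones_flow}) with $A^0=\pi R^2$ produces precisely the ODE (\ref{p^0}) for $p^0$.

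Parts (ii) and (iii) repeat this template at orders $\var^{-1}$ and $\var^0$, the only novelty being that the right-hand sides now accumulate the curvature $\kappa$ and torsion $\tau$ corrections generated by the $d_{0j}^q$ terms and by the geometric expansion of the denominators $1-\var\kappa s_3R\cos s_2$, together with the already-determined lower-order fields. At order $\var^{-1}$ the convective and temporal terms are still absent, so the tangential component, combined with the incompressibility constraint for the transverse components, yields the explicit first-order velocity (\ref{u^1_1}) and its companions; feeding $Q^1$ into $\d Q^1/\d s_1=0$ then gives (\ref{edp_p1}) for $p^1$, and the transverse balance fixes the quadratic-in-$s_3$ part of $p^2$ displayed in (\ref{et-p2}). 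At order $\var^0$ the operator $D_t$, the convective term and the body force $b_0$ all enter; the tangential momentum balance produces the lengthy formula (\ref{u_1^2}) for $u_1^2$, and $\d Q^2/\d s_1=0$ supplies the ODE (\ref{eq-dif-p02}) for the cross-section-independent part $p_0^2(t,s_1)$.

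The one genuinely structural step is the transverse part of the order-$\var^0$ system, where $\mathbf{U}^2=(u_2^2,u_3^2)$ and the next pressure coefficient $p^3$ satisfy the two-dimensional Stokes problem (\ref{eqU2}) on the disk $\omega$, with right-hand sides $g$ and $\mathbf{F}$ read off from (\ref{funcion_g})--(\ref{funcion_F}) and homogeneous boundary data from (\ref{bc_terms}). To invoke Theorem \ref{Teman} I must check its compatibility condition, which here reads $\into g\,d\bz=0$ because the boundary velocity vanishes; integrating $g$ over $\omega$ kills every term odd in $\bz$, and the remaining even terms combine to $\frac{\pi R}{4\rho_0\nu}\big[\frac{R^2}{2}\frac{\d^2 p^1}{\d s_1^2}+2R\frac{\d R}{\d s_1}\frac{\d p^1}{\d s_1}\big]$, which vanishes exactly by the equation (\ref{edp_p1}) already established for $p^1$. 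Theorem \ref{Teman} then gives a unique $\mathbf{U}^2$ and a $p^3$ unique up to an additive function of $(t,s_1)$, the explicit formula being deferred to the appendix. The main obstacle throughout is not conceptual but the sheer volume of algebra in regrouping (\ref{NavierStokes_asymp}) at order $\var^0$, where viscous, convective, temporal and geometric contributions of several tensorial types must be sorted consistently; the verification of the Stokes compatibility condition is the single place where an \emph{honest} argument, rather than a computation, is required.
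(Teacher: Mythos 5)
Your overall route is the paper's: collect powers of $\var$ in (\ref{NavierStokes_asymp})--(\ref{NavierStokes_asymp2}), solve the resulting cross-sectional problems, obtain the pressure equations (\ref{p^0}), (\ref{edp_p1}) and (\ref{eq-dif-p02}) from the flow relations (\ref{relaciones_flow}), and invoke Theorem \ref{Teman} for the transverse Stokes systems. But part (i) as you argue it has a genuine gap. You first claim that $p^1=p^1(t,s_1)$ follows from ``the analogous fact one order higher'', and then use this to declare that the projections of the order-$\var^{-2}$ momentum balance on $\bbv_2,\bbv_3$ are \emph{homogeneous} Poisson problems, whence $u_2^0=u_3^0=0$. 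The analogy fails: at order $\var^{-3}$ the transverse gradient of $p^0$ stands alone, because the viscous term first enters at order $\var^{-2}$; but at order $\var^{-2}$ the transverse gradient of $p^1$ is balanced against the viscous operator acting on $(u_2^0,u_3^0)$ --- these are exactly equations (\ref{et10})--(\ref{et11}) of the paper, whose right-hand sides contain $\d p^1/\d s_2$ and $\d p^1/\d s_3$, unknown at that stage. So you cannot conclude that $p^1$ is independent of the cross-sectional variables before solving for $\mathbf{U}^0=(u_2^0,u_3^0)$, nor that the problems for $u_2^0,u_3^0$ are homogeneous before knowing that about $p^1$: the argument is circular. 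The paper's resolution is to couple (\ref{et10})--(\ref{et11}) with the incompressibility constraint at order $\var^{-1}$, namely $\mathrm{div}_\mathbf{z}\mathbf{U}^0=0$, and the zero Dirichlet data from (\ref{bc_terms}), so that $(\mathbf{U}^0,p^1)$ solves a genuine Stokes system on $\omega$; the uniqueness part of Theorem \ref{Teman} then yields $\mathbf{U}^0=\mathbf{0}$ and $p^1=p^1(t,s_1)$ \emph{simultaneously}. This is precisely the device you use later for $(\mathbf{U}^2,p^3)$, so the repair is within your own scheme, but as written the step is unjustified.

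A second, smaller inaccuracy: you assert that the compatibility condition of (\ref{eqU2}) is ``the single place where an honest argument is required''. It is not. The intermediate Stokes problem for $(\mathbf{U}^1,p^2)$ --- transverse momentum at order $\var^{-1}$ together with incompressibility at order $\var^{0}$, the paper's (\ref{problema_U1}) --- has nonhomogeneous data: divergence $g^1$ and boundary velocity $\bphi^1=\frac{\d R}{\d t}(\cos s_2,\sin s_2)$ coming from the wall motion in (\ref{bc_terms}). Its compatibility condition, $\int_\omega g^1=\int_{\d\omega}\bphi^1\cdot\mathbf{n}=2\pi\,\pfrac{R}{t}$, is not automatic: in the paper it is verified exactly by means of the equation (\ref{p^0}) already derived for $p^0$, and it is where the coupling between the wall motion and the pressure enters the solvability of the expansion; your plan skips this check. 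The computations you do spell out --- $Q^0=-\frac{\pi R^4}{8\rho_0\nu}\frac{\d p^0}{\d s_1}$ leading to (\ref{p^0}), and the cancellation $\frac{\pi R^3}{8\rho_0\nu}\frac{\d^2 p^1}{\d s_1^2}+\frac{\pi R^2}{2\rho_0\nu}\frac{\d R}{\d s_1}\frac{\d p^1}{\d s_1}=0$ via (\ref{edp_p1}) for the compatibility of (\ref{eqU2}) --- agree with the paper.
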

\begin{proof}
After substitution of (\ref{velocidad})--(\ref{presion}) in (\ref{NavierStokesref1})--(\ref{NavierStokesref2}), we have obtained equations 
(\ref{NavierStokes_asymp})--(\ref{NavierStokes_asymp2}), and we proceed now to identify the terms of the asymptotic expansion.
 
 \begin{enumerate}[label={{(\roman*)}}, leftmargin=0em ,itemindent=3em]
  \item Grouping the terms multiplied by $\var^{-3}$ in the equation (\ref{NavierStokes_asymp}), we find these two equations related with the zeroth-order term of pressure,
\begin{align*}
-\frac{\sin s_2}{s_3}\frac{\d p^0}{\d s_2} + (\cos s_2) \frac{\d p^0}{\d s_3} =0, \qquad 
\frac{\cos s_2}{s_3}\frac{\d p^0}{\d s_2} + (\sin s_2) \frac{\d p^0}{\d s_3} =0.
\end{align*}
Therefore, it is clear that $\displaystyle \frac{\d p^0}{\d s_2} = \frac{\d p^0}{\d s_3} =0$, so 
\begin{align}\label{p0_dependencia}
p^0=p^0(t,s_1).
\end{align}

This is, the zeroth-order term of pressure does not depend on the cross-sectional variables and only depends on time and on the point $s_1$ of the midle line of the curved pipe. 

If we group now the terms multiplied by $\varepsilon^{-2}$ in the equation (\ref{NavierStokes_asymp}), we obtain the equations
\begin{align}\label{et9}
\frac{1}{(Rs_3)^2}\frac{\d^2 u_1^0}{\d s_2^2} + \frac{1}{R^2s_3}\frac{\d u_1^0}{\d s_3} + \frac{1}{R^2}\frac{\d^2u_1^0}{\d s_3^2}&=\frac{1}{\nu\rho_0}\frac{\d p^0}{\d s_1},\\ \label{et10}
\frac{1}{(Rs_3)^2}\frac{\d^2 u_2^0}{\d s_2^2} + \frac{1}{R^2s_3}\frac{\d u_2^0}{\d s_3} + \frac{1}{R^2}\frac{\d^2u_2^0}{\d s_3^2}&= \frac{1}{\nu\rho_0}\left(-\frac{\sin s_2}{R s_3}\frac{\d p^1}{\d s_2} + \frac{\cos s_2}{R}\frac{\d p^1}{\d s_3}\right),
\\ \label{et11}
\frac{1}{(Rs_3)^2}\frac{\d^2 u_3^0}{\d s_2^2} + \frac{1}{R^2s_3}\frac{\d u_3^0}{\d s_3} + \frac{1}{R^2}\frac{\d^2u_3^0}{\d s_3^2}&= \frac{1}{\nu\rho_0}\left(\frac{\cos s_2}{R s_3}\frac{\d p^1}{\d s_2} + \frac{\sin s_2}{R}\frac{\d p^1}{\d s_3}\right).
\end{align}

Using the change of variable (\ref{def_z}) in (\ref{et9}), and taking into account the boundary condition in (\ref{bc_terms}), we obtain the following problem for the axial component of the zeroth-order term of velocity:
\begin{equation}\label{pb_u10}
\left \{ \begin{array}{ll}
\displaystyle\Delta_\mathbf{z}u_1^0 \displaystyle=\frac{R^2}{\nu\rho_0}\frac{\d p^0}{\d s_1} &\textrm{in} \ \omega, 
\\ u_1^0=0 \ &\textrm{on} \ \d \omega. 
\end{array} \right .
\end{equation}

The problem (\ref{pb_u10}) has a unique solution, which expression is
\begin{equation*}
u_1^0= \frac{R^2}{4\rho_0 \nu} \frac{\partial p^0}{\partial s_1} (s_3^2 -1).
\end{equation*}

Now, from the first relation in (\ref{relaciones_flow}), we have that
\begin{align*}
\frac{\d}{\d s_1} \left(R^2\int_{0}^{2\pi} \int_0^1 s_3 u_1^0\, ds_3 ds_2\right)= -2\pi R \frac{\d R}{\d t}.
\end{align*}

Hence, using the expression for $u_1^0$ that we have obtained, we deduce that the left-hand side of last equality verifies 
\begin{align*}
\frac{\d}{\d s_1} \left(R^2\int_{0}^{2\pi} \int_0^1 \frac{R^2}{4\rho_0 \nu} \frac{\partial p^0}{\partial s_1} s_3(s_3^2 -1)\, ds_3 ds_2\right)
= \frac{\d}{\d s_1} \left(-\frac{2 \pi R^4}{16\rho_0 \nu}\frac{\partial p^0}{\partial s_1}\right)
\end{align*}

Since $p^0$ does not depend on the cross-sectional variables (see (\ref{p0_dependencia})),  we obtain that $p^0$ satisfies the following equation,
\begin{equation*}
 \frac{\d}{\d s_1} \left( R^4 \frac{\d p^0}{\d s_1} \right) = 16 \nu \rho_0 R \frac{\d R}{\d t}, 
\end{equation*}
that has a unique solution with the appropriate initial and boundary conditions.

If we now group the terms multiplied by $\varepsilon^{-1}$ in the equation (\ref{NavierStokes_asymp2}), we find that
\begin{align*}
-\frac{\sin s_2}{s_3}\frac{\d u_2^0}{\d s_2} + (\cos s_2) \frac{\d u_2^0}{\d s_3} + \frac{\cos s_2}{s_3}\frac{\d u_3^0}{\d s_2} + (\sin s_2) \frac{\d u_3^0}{\d s_3}=0.
\end{align*}

Using the change of variable (\ref{def_z}) in this last equation and in (\ref{et10})--(\ref{et11}), and considering the boundary conditions in (\ref{bc_terms}), the cross-sectional components of the zeroth-order term of velocity, denoted by $\mathbf{U}^0=(u_2^0,u_3^0)$, and the first order term of pressure, $p^1$, are solution of the problem,
\begin{equation*}
\left \{ \begin{array}{ll}
	\displaystyle\Delta_\mathbf{z}\mathbf{U^0}=\frac{R}{\nu\rho_0}\nabla_\mathbf{z}p^1 
	&\textrm{in}\ \omega, 
\\ \mathrm{div}_\mathbf{z}\mathbf{U^0}=0 &\textrm{in}\ \omega, 
\\\displaystyle 
\mathbf{U^0} = \mathbf{0} &\textrm{on}\ \d \omega,
\end{array} \right .
\end{equation*}
Applying Theorem \ref{Teman}, this problem has a unique solution 
(up to an arbitrary function depending only on $t$ and $s_1$, for the pressure term), and this solution is
\begin{align*}
{u_2^0=u_3^0=0, \quad p^1=p^1(t,s_1)}.
\end{align*}

\item Grouping the terms multiplied by $\varepsilon^{-1}$ in the equation (\ref{NavierStokes_asymp}), we obtain 
	
\begin{align}\label{et15}
\frac{1}{(Rs_3)^2}\frac{\d^2 u_1^1}{\d s_2^2} + \frac{1}{R^2s_3}\frac{\d u_1^1}{\d s_3} + \frac{1}{R^2}\frac{\d^2u_1^1}{\d s_3^2}&=\frac{1}{\nu\rho_0}\left( \frac{\d p^1}{\d s_1} + R\kappa s_3\cos s_2\frac{\d p^0}{\d s_1}\right) + \frac{\kappa \cos s_2}{R}\frac{\d u^0_1}{\d s_3},
\\ \label{et16}
		\frac{1}{(Rs_3)^2}\frac{\d^2 u_2^1}{\d s_2^2} + \frac{1}{R^2s_3}\frac{\d u_2^1}{\d s_3} + \frac{1}{R^2}\frac{\d^2u_2^1}{\d s_3^2}&= \frac{1}{\nu\rho_0}\left(-\frac{\sin s_2}{R s_3}\frac{\d p^2}{\d s_2} + \frac{\cos s_2}{R}\frac{\d p^2}{\d s_3}\right),
		\\ \label{et17}
		\frac{1}{(Rs_3)^2}\frac{\d^2 u_3^1}{\d s_2^2} + \frac{1}{R^2s_3}\frac{\d u_3^1}{\d s_3} + \frac{1}{R^2}\frac{\d^2u_3^1}{\d s_3^2}&= \frac{1}{\nu\rho_0}\left(\frac{\cos s_2}{R s_3}\frac{\d p^2}{\d s_2} + \frac{\sin s_2}{R}\frac{\d p^2}{\d s_3}\right).
		\end{align}
		
Using (\ref{u_1^0}) in (\ref{et15}), and then the change of variable (\ref{def_z}) and the boundary condition (\ref{bc_terms}), we obtain that $u_1^1$ is the unique solution of the problem 
	\begin{equation}\label{pb_u11}
	\left \{ \begin{array}{ll}
	\displaystyle\Delta_\mathbf{z}u_1^1 \displaystyle=\frac{R^2}{\nu\rho_0}\left( \frac{\d p^1}{\d s_1} + \frac{3R\kappa}{2} z_2\frac{\d p^0}{\d s_1}\right) &\textrm{in} \ \omega, 
	\\ u_1^1=0 \ &\textrm{on} \ \d \omega.
	\end{array} \right .
	\end{equation}

Now, it is easy to check (for example, by substitution in (\ref{et15})) that 
the unique solution is 
	\begin{align*}
	u^1_1= \left [ \frac{3R^3\kappa s_3 \cos s_2}{16 \nu \rho_0} \frac{\d p^0}{\d s_1}  
	+ \frac{R^2}{4 \nu \rho_0}\frac{\d p^1}{\d s_1}\right ](s_3^2- 1).
	\end{align*}
	
From the second relation in (\ref{relaciones_flow}), for $k=1$, we have that
	\begin{align*}
	\frac{\d}{\d s_1} \left(R^2\int_{0}^{2\pi} \int_0^1 s_3 u_1^1\, ds_3 ds_2\right)= 0,
	\end{align*}
and, substituting the expression of $u_1^1$, we obtain that
	\begin{align*}
	0&= \frac{\d}{\d s_1} \left(R^2\int_{0}^{2\pi} \int_0^1 \left [ \frac{3R^3\kappa s_3 \cos s_2}{16 \nu \rho_0} \frac{\d p^0}{\d s_1}  
	+ \frac{R^2}{4 \nu \rho_0}\frac{\d p^1}{\d s_1}\right ] s_3(s_3^2 -1)\, ds_3 ds_2\right)\\
	&=\frac{\d}{\d s_1} \left(R^2\int_{0}^{2\pi} \int_0^1 \frac{R^2}{4\rho_0 \nu} \frac{\partial p^1}{\partial s_1} s_3(s_3^2 -1)\, ds_3ds_2\right)\\
	&= \frac{\d}{\d s_1} \left(-\frac{2\pi R^4}{16\rho_0 \nu}\frac{\partial p^1}{\partial s_1}\right),
	\end{align*}
so we conclude that $p^1$ 
	(remember that it is only function of $t$ and $s_1$) is solution of
	\begin{equation*}
	\frac{\d}{\d s_1} \left( R^4 \frac{\d p^1}{\d s_1} \right) =0,
	\end{equation*}
with suitable initial and boundary conditions.

If we now group the terms multiplied by $\varepsilon^{0}$  in (\ref{NavierStokes_asymp2}), we obtain 
	\begin{align*}
	& -\frac{\sin s_2}{s_3}\frac{\d u_2^1}{\d s_2} + (\cos s_2)\frac{\d u_2^1}{\d s_3} + \frac{\cos s_2}{s_3}\frac{\d u_3^1}{\d s_2} + (\sin s_2) \frac{\d u_3^1}{\d s_3} \\
	&\qquad{} + R\left (\frac{\d u_1^0}{\d s_1}-\tau\frac{\d u_1^0}{\d s_2}
	- \frac{s_3}{R}\frac{\d R}{\d s_1}\frac{\d u_1^0}{\d s_3}\right )=0.
	\end{align*}
	
Let be $\mathbf{U^1}=(u^1_2,u^1_3)$. Applying the change of variable (\ref{def_z}) to the previous equation, to (\ref{et16})--(\ref{et17}) and taking into account the boundary conditions (\ref{bc_terms}), we find that $(\mathbf{U^1}, p^2)$ is solution of the problem
\begin{equation} \label{problema_U1}
\left \{ \begin{array}{l}
\displaystyle\Delta_\mathbf{z}\mathbf{U^1}=\frac{R}{\nu\rho_0}\nabla_\mathbf{z}p^2 
\quad \textrm{in}\ \omega,
\\ \displaystyle \mathrm{div}_\mathbf{z}\mathbf{U^1}=\frac{R}{4 \rho_0 \nu}\left(\frac{\d}{\d s_1}\left(R^2 \frac{\d p^0}{\d s_1}\right) - (z_2^2+z_3^2)R^2\frac{\d^2 p^0}{\d s_1^2} \right)=:g^1 
\quad \textrm{in}\ \omega, 
\\ \displaystyle 
\mathbf{U^1} = \frac{\d R}{\d t}(\cos s_2, \sin s_2)=:\mathbf{\bphi^1} \quad \textrm{on} \ \d \omega.
\end{array} \right.
\end{equation}

Theorem \ref{Teman} ensures the existence and uniqueness of solution of this problem 
(up to an arbitrary function depending only on $t$ and $s_1$, for the pressure term) if the compatibility condition given by 
\begin{align}\label{cond_comp_U1}
\int_{\omega}g^1 =\int_{\d\omega}\bphi^1\cdot \mathbf{n},
\end{align}
where $\mathbf{n}=(\cos s_2, \sin s_2)$ is the unit outward normal vector on $\d\omega$, is fulfilled. On one hand, we have
\begin{align*}
\int_{\omega}g^1&= \int_{\omega} \frac{R}{4 \rho_0 \nu}\left(\frac{\d}{\d s_1}\left(R^2 \frac{\d p^0}{\d s_1}\right) - (z_2^2+z_3^2)R^2\frac{\d^2 p^0}{\d s_1^2} \right)\, dz_2 dz_3
\\
&=\int_{0}^{1}\int_{0}^{2\pi} \frac{s_3R}{4 \rho_0 \nu}\left(\frac{\d}{\d s_1}\left(R^2 \frac{\d p^0}{\d s_1}\right) - s_3^2R^2\frac{\d^2 p^0}{\d s_1^2} \right)\, ds_2 ds_3
\\
&=\frac{2\pi R}{4 \rho_0 \nu} \int_{0}^{1} \left ( s_3\frac{\d}{\d s_1}\left(R^2 \frac{\d p^0}{\d s_1}\right) - s_3^3R^2\frac{\d^2 p^0}{\d s_1^2} \right )\, ds_3
\\
&=\frac{2\pi R}{16 \rho_0 \nu} \left(2\frac{\d}{\d s_1}\left(R^2 \frac{\d p^0}{\d s_1}\right) - R^2\frac{\d^2 p^0}{\d s_1^2} \right).
\end{align*}
On the other hand,
\begin{align*}
\int_{\d\omega}\bphi^1\cdot \mathbf{n}= \int_0^{2\pi} \pfrac{R}{t}\, ds_2 = 2\pi\pfrac{R}{t}.
\end{align*}
Therefore, the compatibility condition (\ref{cond_comp_U1}) is equivalent to
\begin{equation} \label{condÐcomp-U1-b}
 \frac{R}{16 \rho_0 \nu} \left(2\frac{\d}{\d s_1}\left(R^2 \frac{\d p^0}{\d s_1}\right) - R^2\frac{\d^2 p^0}{\d s_1^2} \right) = \pfrac{R}{t},
\end{equation}
and it is easy to deduce from (\ref{p^0}) that (\ref{condÐcomp-U1-b}) is verified. Then, we can conclude that there exists a solution $(\mathbf{U^1}, p^2)$ of (\ref{problema_U1}) such that $\mathbf{U^1}$ is unique and $p^2$ is unique up to a function depending on $t$ and $s_1$. 
This solution can be computed explicitly (see appendix \ref{apend.U1} for the details), and it is 
\begin{align*}
\mathbf{U^1}&= \frac{R}{16 \rho_0 \nu}\left( 2\frac{\d}{\d s_1}\left(R^2 \frac{\d p^0}{\d s_1}\right) - (z_2^2+z_3^2)R^2\frac{\d^2 p^0}{\d s_1^2} \right)\left(z_2,z_3\right), \\
p^2&=- \frac{R^2}{4}\pfrac{^2p^0}{s_1^2}\left(z_2^2 + z_3^2 \right) + p_0^2(t,s_1), 
\end{align*}
where the unknown function $p_0^2$ will be determined later (see (\ref{eq-p02})).

\item Grouping the terms multiplied by $\var^0=1$ in (\ref{NavierStokes_asymp}), we find 
\begin{align}\nonumber
&\pfrac{(u_k^0 v_{ki})}{t} - \frac{s_3}{R}\pfrac{R}{t}\pfrac{(u_k^0 v_{ki})}{s_3} + \left( -\frac{\sin s_2}{R s_3}u_2^1 + \frac{\cos s_2}{R s_3}u_3^1\right)\pfrac{(u_k^0v_{ki})}{s_2} 
\\ \nonumber
&\quad {} + \left(\frac{\cos s_2}{R}u_2^1 + \frac{\sin s_2}{R}u_3^1\right)\pfrac{(u_k^0 v_{ki})}{s_3} + \pfrac{(u_k^0 v_{ki})}{s_1}u_1^0 - \tau\pfrac{(u_k^0 v_{ki})}{s_2}u_1^0  
\\ \nonumber
&\quad {} + \left(-\frac{\sin s_2}{R s_3}u_2^0 + \frac{\cos s_2}{R s_3}u_3^0\right)\pfrac{(u_k^1 v_{ki})}{s_2} -\frac{s_3}{R}\pfrac{R}{s_1}\pfrac{(u_k^0v_{ki})}{s_3}u_1^0 \\ \nonumber
&\quad {} +\left( \frac{\cos s_2}{R}u_2^0 + \frac{\sin s_2}{R}u_3^0 \right)\pfrac{(u_k^1 v_{ki})}{s_3} 
\\ \nonumber 
&\quad {} -\nu\left(\kappa^2\cos s_2\sin s_2 \pfrac{(u_k^0 v_{ki})}{s_2} - \kappa^2s_3\cos^2s_2\pfrac{(u_k^0 v_{ki})}{s_3} + \pfrac{^2(u_k^0 v_{ki})}{s_1^2} \right.
\\ \nonumber
&\quad {} - \tau'\pfrac{(u_k^0v_{ki})}{s_2} -\frac{s_3}{R}\left(\pfrac{^2R}{s_1^2} - \frac{2}{R}\left(\pfrac{R}{s_1} \right)^2\right)\pfrac{(u_k^0v_{ki})}{s_3} -2\tau\pfrac{^2(u_k^0 v_{ki})}{s_1\d s_2} 
\\ \nonumber 
&\quad {} -2\frac{s_3}{R}\pfrac{R}{s_1} \pfrac{^2(u_k^0v_{ki})}{s_1\d s_3} 
+ 2\tau\frac{s_3}{R}\pfrac{R}{s_1}\pfrac{^2 (u_k^0 v_{ki})}{s_3\d s_2} + \tau^2 \pfrac{^2(u_k^0 v_{ki})}{s_2^2}
\\&\nonumber
\quad {} + \frac{s_3^2}{R^2}\left(\pfrac{R}{s_1} \right)^2\pfrac{^2(u_k^0 v_{ki})}{s_3^2} + \kappa\frac{\sin s_2}{R s_3}\pfrac{(u_k^1 v_{ki})}{s_2} - \kappa\frac{\cos s_2}{R}\pfrac{(u_k^1v_{ki})}{s_3} 
\\&\left. \label{et18}
\quad {} + \frac{1}{(Rs_3)^2}\pfrac{^2(u_k^2 v_{ki})}{s_2^2} + \frac{1}{R^2s_3}\pfrac{(u_k^2 v_{ki})}{s_3} + \frac{1}{R^2}\pfrac{^2(u_k^2 v_{ki})}{s_3^2}   \right)= DP^0_kv_{ki} + b^0_{0k}v_{ki},
\end{align}
where,
\begin{align*}
DP_1^0&=-\frac{1}{\rho_0}\left(\kappa^2s_3^2R^2\cos^2 s_2 \pfrac{p^0}{s_1} + \kappa s_3R\cos s_2 \pfrac{p^1}{s_1} - \tau\kappa s_3R \cos s_2 \pfrac{p^1}{s_2}  \right.
\\
& \left. \qquad {} -\kappa s_3^2 \cos s_2 \pfrac{R}{s_1}\pfrac{p^1}{s_3} + \pfrac{p^2}{s_1} 
- \tau \pfrac{p^2}{s_2} - \frac{s_3}{R}\pfrac{R}{s_1}\pfrac{p^2}{s_3} \right),
\\
DP_2^0&=-\frac{1}{\rho_0} \left(-\frac{\sin s_2}{R s_3} \pfrac{p^3}{s_2} + \frac{\cos s_2}{R }\pfrac{p^3}{s_3} \right),
\\
DP_3^0&=-\frac{1}{\rho_0} \left(\frac{\cos s_2}{R s_3} \pfrac{p^3}{s_2} + \frac{\sin s_2}{R }\pfrac{p^3}{s_3} \right).
\end{align*}

Now, we use the expressions obtained in steps $(i)$ and $(ii)$ (see (\ref{u_1^0})--(\ref{edp_p1}) and (\ref{et-p2})), and we replace them into equation (\ref{et18}). Since $\{ \bbv_1,\bbv_2,\bbv_3 \}$ is an orthonormal basis, we obtain three equations by grouping the terms multiplied by each vector in (\ref{et18}). Therefore, from the terms multiplied by $\bbv_1$ in (\ref{et18}), we obtain
 \begin{align*}
 &\frac{1}{(Rs_3)^2}\frac{\d^2 u_1^2}{\d s_2^2} + \frac{1}{R^2s_3}\frac{\d u_1^2}{\d s_3} + \frac{1}{R^2}\frac{\d^2u_1^2}{\d s_3^2} \\
 &\quad = \left( \frac{R^2}{4\rho_0\nu^2} \pfrac{^2 p^0}{t\d s_1} - \frac{R^4}{16 \rho^2\nu^3}\pfrac{p^0}{s_1}\pfrac{^2p^0}{s_1^2} -\frac{R^2}{2 \rho_0\nu}\pfrac{^3p^0}{s_1^3} +\frac{7 \kappa^2 R^2}{16\rho_0 \nu}\pfrac{p^0}{s_1} \right)s_3^2
 \\
 & \qquad {} +\frac{R^4}{32\rho_0^2\nu^3}\pfrac{p^0}{s_1}\pfrac{^2 p^0}{s_1^2} s_3^4 -\frac{1}{4\rho_0\nu^2} \pfrac{}{t}\left( R^2 \pfrac{p^0}{s_1}\right) + \frac{R^2}{16 \rho_0^2 \nu^3}\pfrac{p^0}{s_1}\pfrac{}{s_1}\left(R^2 \pfrac{p^0}{s_1} \right) \\
 &\qquad {} + \frac{1}{4 \rho_0 \nu} \pfrac{^2}{s_1^2}\left( R^2 \pfrac{p^0}{s_1} \right) - \frac{7\kappa^2 R^2}{16 \rho_0\nu}\pfrac{p^0}{s_1} +\frac{1}{\rho_0\nu}\pfrac{p_0^2}{s_1} \\
 &\qquad {} +\frac{3\kappa R}{2\rho_0\nu}\pfrac{p^1}{s_1}s_3 \cos s_2 + \frac{15 \kappa^2 R^2}{8 \rho_0\nu}\pfrac{p^0}{s_1}s_3^2 \cos^2 s_2  -\frac{b_{01}}{\nu}.
 \end{align*}
 
This equation, together with the boundary condition at $s_3=1$ (see (\ref{bc_terms})), and using the change of variable (\ref{def_z}), shows that $u_1^2$ is the unique solution of the problem 
	\begin{equation}\label{pb-u12}
	\left \{ \begin{array}{l}
	\displaystyle\Delta_\mathbf{z}u_1^2 
	= \left( \frac{R^4}{4\rho_0\nu^2} \pfrac{^2 p^0}{t\d s_1} - \frac{R^6}{16 \rho^2\nu^3}\pfrac{p^0}{s_1}\pfrac{^2p^0}{s_1^2} -\frac{R^4}{2 \rho_0\nu}\pfrac{^3p^0}{s_1^3} +\frac{7 \kappa^2 R^4}{16\rho_0 \nu}\pfrac{p^0}{s_1} \right)(z_2^2 + z_3^2) \\[12pt]
\displaystyle \qquad {} +\frac{R^6}{32\rho_0^2\nu^3}\pfrac{p^0}{s_1}\pfrac{^2 p^0}{s_1^2} (z_2^2 + z_3^2)^2 -\frac{R^2}{4\rho_0\nu^2} \pfrac{}{t}\left( R^2 \pfrac{p^0}{s_1}\right)  \\[12pt]
\displaystyle \qquad {} + \frac{R^4}{16 \rho_0^2 \nu^3}\pfrac{p^0}{s_1}\pfrac{}{s_1}\left(R^2 \pfrac{p^0}{s_1} \right) 
+ \frac{R^2}{4 \rho_0 \nu} \pfrac{^2}{s_1^2}\left( R^2 \pfrac{p^0}{s_1} \right)  \\[12pt]
\displaystyle \qquad {} - \frac{7\kappa^2 R^4}{16 \rho_0\nu}\pfrac{p^0}{s_1} +\frac{R^2}{\rho_0\nu}\pfrac{p_0^2}{s_1} 
+ \frac{3\kappa R^3}{2\rho_0\nu}\pfrac{p^1}{s_1}z_2 + \frac{15 \kappa^2 R^4}{8 \rho_0\nu}\pfrac{p^0}{s_1}z_2^2  - R^2\frac{b_{01}}{\nu}
\quad \textrm{in} \ \omega, 
\\[12pt] \displaystyle u_1^2=0 \quad \textrm{on} \ \d \omega.
	\end{array} \right .
	\end{equation}

The solution of (\ref{pb-u12}) must be polynomial on $z_2$ and $z_3$ and, by inspection in this kind of functions, we can find that $u_1^2$ is 
\begin{align} \label{eq-u12-z2-z3}
 u_1^2&= \frac{R^2}{16} \left(\frac{R^2}{4 \rho_0 \nu^2} \frac{\d^2 p^0}{\d t\d s_1 } -\frac{R^4}{16\rho_0^2\nu^3}\frac{\d p^0}{\d s_1}\frac{\d^2p^0}{\d s_1^2}
- \frac{R^2}{2 \rho_0 \nu}\frac{\d^3 p^0}{\d s_1 ^3} 
 + \frac{11 \kappa^2 R^2}{8 \rho_0 \nu}\frac{\d p^0}{\d s_1}\right)((z_2^2+z_3^2)^2 - 1) 
\nonumber \\
& \qquad {} + \frac{R^2}{4} \left(-\frac1{4\rho_0 \nu^2}\frac{\d}{\d t}\left(R^2 \frac{\d p^0}{\d s_1}  \right) \right.
+ \frac{R^2}{16 \rho_0^2 \nu^3}\frac{\d p^0}{\d s_1} \frac{\d}{\d s_1} \left( R^2 \frac{\d p^0}{\d s_1}  \right) 
+ \frac1{4 \rho_0 \nu} \frac{\d^2}{\d s_1^2} \left(R^2 \frac{\d p^0}{\d s_1}\right) \nonumber
 \\ \displaystyle &\qquad \nonumber\left.
 {} -\frac{7 \kappa^2 R^2}{16 \rho_0 \nu}\frac{\d p^0}{\d s_1} +\frac1{\rho_0 \nu} \frac{\d p^2_0}{\d s_1} - \frac{b_{01}}{\nu}\right)(z_2^2+z_3^2-1 ) 
    + \frac{R^6}{1152 \rho_0^2 \nu^3} \frac{\d p^0}{\d s_1}\frac{\d^2 p^0}{\d s_1^2}((z_2^2+z_3^2)^3-1)
    \\& \displaystyle \qquad 
{} + \frac{3\kappa R^3 }{16 \rho_0 \nu}\frac{\d p^1}{\d s_1} (z_2^2+z_3^2-1)z_2 
+ \frac{5 \kappa^2 R^4}{64 \rho_0 \nu} \frac{\d p^0}{\d s_1} (z_2^2+z_3^2-1)(z_2^2-z_3^2),
 \end{align}
that, using the change of variable (\ref{def_z}), can be written
\begin{align} \label{eq-u12-s3}
 u_1^2&= \frac{R^2}{16} \left(\frac{R^2}{4 \rho_0 \nu^2} \frac{\d^2 p^0}{\d t\d s_1 } -\frac{R^4}{16\rho_0^2\nu^3}\frac{\d p^0}{\d s_1}\frac{\d^2p^0}{\d s_1^2}
- \frac{R^2}{2 \rho_0 \nu}\frac{\d^3 p^0}{\d s_1 ^3} 
 + \frac{11 \kappa^2 R^2}{8 \rho_0 \nu}\frac{\d p^0}{\d s_1}\right)(s_3^4 - 1) 
\nonumber \\
& \qquad {} + \frac{R^2}{4} \left(-\frac1{4\rho_0 \nu^2}\frac{\d}{\d t}\left(R^2 \frac{\d p^0}{\d s_1}  \right) \right.
+ \frac{R^2}{16 \rho_0^2 \nu^3}\frac{\d p^0}{\d s_1} \frac{\d}{\d s_1} \left( R^2 \frac{\d p^0}{\d s_1}  \right) 
+ \frac1{4 \rho_0 \nu} \frac{\d^2}{\d s_1^2} \left(R^2 \frac{\d p^0}{\d s_1}\right) \nonumber
 \\ \displaystyle &\qquad \nonumber\left.
 {} -\frac{7 \kappa^2 R^2}{16 \rho_0 \nu}\frac{\d p^0}{\d s_1} +\frac1{\rho_0 \nu} \frac{\d p^2_0}{\d s_1} - \frac{b_{01}}{\nu}\right)(s_3^2-1 ) 
    + \frac{R^6}{1152 \rho_0^2 \nu^3} \frac{\d p^0}{\d s_1}\frac{\d^2 p^0}{\d s_1^2}(s_3^6-1)
    \\& \displaystyle \qquad 
{} + \frac{3\kappa R^3 }{16 \rho_0 \nu}\frac{\d p^1}{\d s_1} (s_3^3- s_3)\cos s_2 
+ \frac{5 \kappa^2 R^4}{64 \rho_0 \nu} \frac{\d p^0}{\d s_1} (s_3^4- s_3^2)\cos(2 s_2).
 \end{align}
 
Using now (\ref{relaciones_flow}) for $k=2$, we have that 
\begin{align} \label{integ-u12-s3}
	\frac{\d}{\d s_1} \left(R^2\int_{0}^{2\pi} \int_0^1 s_3 u_1^2\,  ds_3 ds_2\right)= 0,
	\end{align}
and, if we substitute the expression of $u_1^2$ given by (\ref{eq-u12-s3}) in (\ref{integ-u12-s3}), 
we obtain that $p_0^2$ is the solution of the problem
\begin{align}
	\displaystyle \frac{\d}{\d s_1} \left(R^4 \frac{\d p_0^2}{\d s_1}\right) &= \frac{\d}{\d s_1} \left( - \frac{3 R^8}{64 \rho_0 \nu^2}\frac{\d p^0}{\d s_1}\frac{\d^2 p^0}{\d s_1^2}
	- \frac{R^6}{12} \frac{\d^3 p^0}{\d s_1^3} \right. - \frac{\kappa^2 R^6}{48} \frac{\d p^0}{\d s_1} 
	+\frac{R^5}{2\nu} \frac{\d R}{\d t} \frac{\d p^0}{\d s_1} \nonumber
	\\ \displaystyle  &\qquad 
	- \frac{R^7}{8 \rho_0 \nu^2} \frac{\d R}{\d s_1} \left(\frac{\d p^0}{\d s_1}\right)^2	-\frac{R^4}{2}\left(\frac{\d R}{\d s_1}\right)^2\frac{\d p^0}{\d s_1} 
	-\frac{R^5}{2}\frac{\d^2 R}{\d s_1^2}\frac{\d p^0}{\d s_1} \nonumber
	\\ \displaystyle& \left. \qquad  \label{eq-p02}
	- R^5 \frac{\d R}{\d s_1}\frac{\d^2 p^0}{\d s_1^2}  +  \frac{R^6}{6\nu} \frac{\d^2p^0}{\d t\d s_1}   + R^4 \rho_0 b_{01} \right),
	\end{align}
with the adequate initial and boundary conditions. With this equation, we have completed the description of $p^2$ (see (\ref{et-p2})--(\ref{eq-dif-p02})).

Identifying now the terms multiplied by $\bbv_2$ in (\ref{et18}), we obtain 
 \begin{align*}
  &\frac{1}{(Rs_3)^2}\frac{\d^2 u_2^2}{\d s_2^2} + \frac{1}{R^2s_3}\frac{\d u_2^2}{\d s_3} + \frac{1}{R^2}\frac{\d^2u_2^2}{\d s_3^2} 
= \frac{\kappa R^4}{16 \rho_0^2 \nu^3} \left(\pfrac{p^0}{s_1} \right)^2 \left(s_3^4 +1\right)  \\
&\quad {} + \frac{\kappa'R^2}{4\rho_0\nu}\pfrac{p^0}{s_1} + \frac{5\kappa}{8\rho_0\nu}\pfrac{}{s_1}\left( R^2\pfrac{p^0}{s_1}\right) 
    \\
    &\quad {} + \left(  -\frac{\kappa R^4}{8 \rho_0^2\nu^3}\left(\pfrac{p^0}{s_1} \right)^2 - \frac{\kappa}{2\rho_0\nu}\pfrac{}{s_1}\left(R^2 \pfrac{p^0}{s_1}\right) -\frac{\kappa'R^2}{4\rho_0\nu}\pfrac{p^0}{s_1}  + \frac{\kappa R}{\rho_0\nu}\pfrac{R}{s_1}\pfrac{p^0}{s_1} - \frac{\kappa R^2}{16\rho_0\nu} \pfrac{^2p^0}{s_1^2}   \right)s_3^2 
  \\
  & \quad {} - \frac{\kappa R^2 }{8\rho_0\nu}\pfrac{^2p^0}{s_1^2}s_3^2\cos^2 s_2 
+ \frac{1}{\rho_0\nu}\left(-\frac{\sin s_2}{R s_3 }\pfrac{p^3}{s_2} + \frac{\cos s_2}{R}\pfrac{p^3}{s_3}\right) -\frac{ b_{02}}{\nu},
 \end{align*}
 and if we identify the terms of (\ref{et18}) multiplied by $\bbv_3$, 
   \begin{align*}\nonumber
   &\frac{1}{(Rs_3)^2}\frac{\d^2 u_3^2}{\d s_2^2} + \frac{1}{R^2s_3}\frac{\d u_3^2}{\d s_3} + \frac{1}{R^2}\frac{\d^2u_3^2}{\d s_3^2}= 
- \frac{\kappa \sin s_2 \cos s_2}{16s_3\rho_0\nu}\left(2 \pfrac{}{s_1}\left(R^2 \pfrac{p^0}{s_1}\right)s_3 - R^2\pfrac{^2p^0}{s_1^2}s_3^3\right) 
   \\ \nonumber
   &\qquad+ \frac{\kappa \sin s_2 \cos s_2}{16\rho_0\nu}\left(2 \pfrac{}{s_1}\left(R^2 \pfrac{p^0}{s_1}\right) - 3R^2\pfrac{^2p^0}{s_1^2}s_3^2\right) -\frac{\kappa\tau R^2}{4\rho_0\nu}\pfrac{p^0}{s_1}(s_3^2 -1) 
   \\
   &\qquad + \frac{1}{\rho_0\nu}\left( \frac{\cos s_2}{R s_3}\pfrac{p^3}{s_2} + \frac{\sin s_2}{R}\pfrac{p^3}{s_2} \right) -\frac{b_{03}}{\nu}.
  \end{align*}

 Using the change of variable (\ref{def_z}) and doing some simplifications, we deduce that 
 $u_2^2$ and $u_2^3$ verify 
 \begin{align}\nonumber
 \Delta_{\mathbf{z}}u_2^2&=  \frac{\kappa R^6}{16 \rho^2 \nu^3} \left(\frac{\d p^0}{\d s_1}\right)^2 \left( (z_2^2 + z_3^2)^2 +1 \right)   + \frac{\kappa' R^4}{4 \rho_0 \nu}\frac{\d p^0}{\d s_1} + \frac{5 \kappa R^2}{8 \rho_0 \nu} \frac{\d}{\d s_1}\left(R^2 \frac{\d p^0}{\d s_1} \right)
 \\ \nonumber
 & \qquad + \left(- \frac{\kappa R^6}{8 \rho^2_0 \nu^3} \left(\frac{\d p^0}{\d s_1}  \right)^2 - \frac{9\kappa R^4}{16 \rho_0 \nu}\frac{\d^2 p^0}{\d s_1^2} - \frac{\kappa' R^4}{4\rho_0 \nu}\frac{\d p^0}{\d s_1} \right){(z_2^2 + z_3^2)} 
\\ \label{lap_u22}
&\qquad  -\frac{\kappa R^4 }{8 \rho_0 \nu} \frac{\d^2 p^0}{\d s_1^2}z_2^2 + \frac{R}{\rho_0\nu}\pfrac{p^3}{z_2} 
 - \frac{R^2}{\nu}b_{02}, 
 \end{align}
 \begin{align} \label{lap_u23}
  \Delta_{\mathbf{z}}u_3^2&= -\frac{\kappa\tau R^4}{4 \rho_0 \nu} \frac{\d p^0}{\d s_1} (z_2^2 + z_3^2 -1) -\frac{2\kappa R^4}{16 \rho_0\nu}\frac{\d^2p^0}{\d s_1^2}z_2z_3
 + \frac{R}{\rho_0\nu}\pfrac{p^3}{z_3} 
  -\frac{R^2}{\nu}b_{03}.
  \end{align}
  
 Now, if we group the terms multiplied by $\var$ in (\ref{NavierStokes_asymp2}), we obtain 
 \begin{align*}
 &-\frac{\sin s_2}{R s_3}\pfrac{u_2^2}{s_2} + \frac{\cos s_2}{Rs_3}\pfrac{u_3^2}{s_2} + \frac{\cos s_2}{R}\pfrac{u_2^2}{s_3} + \frac{\sin s_2}{R}\pfrac{u_3^2}{s_3}= - \kappa s_3 R \cos s_2 \left( \pfrac{u_1^0}{s_1} 
- \frac{s_3}{R}\pfrac{R}{s_1}\pfrac{u_1^0}{s_3}\right) \\
 &\qquad -\pfrac{u_1^1}{s_1} + \kappa u_2^1 +\tau\pfrac{u^1_1}{s_2} + \frac{s_3}{R} \pfrac{R}{s_1}\pfrac{u_1^1}{s_3},
 \end{align*}
 and using in this equation the expressions obtained in steps $(i)$ and $(ii)$ (see (\ref{u_1^0})--(\ref{edp_p1})),
  \begin{align} \nonumber
  &-\frac{\sin s_2}{s_3}\pfrac{u_2^2}{s_2} + \frac{\cos s_2}{s_3}\pfrac{u_3^2}{s_2} + (\cos s_2)\pfrac{u_2^2}{s_3} + (\sin s_2)\pfrac{u_3^2}{s_3} \\ \nonumber 
  &\quad = - \kappa R^2 s_3 \cos s_2 \left(  \frac{1}{4\rho_0\nu}\pfrac{}{s_1}\left(R^2\pfrac{p^0}{s_1}\right)(s_3^2-1)  -\frac{R}{2\rho_0\nu} \pfrac{R}{s_1}\pfrac{p^0}{s_1}s_3^2   \right) 
  \\ \nonumber 
&\qquad {}   -\frac{3R}{16\rho_0\nu}\pfrac{}{s_1}\left(\kappa R^3\pfrac{p^0}{s_1}\right)s_3(s_3^2-1)\cos s_2 
  -\frac{R}{4\rho_0\nu}\pfrac{}{s_1}\left(R^2\pfrac{p^1}{s_1}\right)(s_3^2-1) 
  \\ \nonumber
  & \qquad+ \frac{\kappa R^2}{16\rho_0\nu}\left( 2\pfrac{}{s_1}\left(R^2\pfrac{p^0}{s_1} \right) - R^2 s_3^2 \pfrac{^2p^0}{s_1^2}\right)s_3\cos s_2 -\frac{3\kappa \tau R^4}{16\rho_0\nu}\pfrac{p^0}{s_1}s_3(s_3^2-1)\sin s_2 
   \\ \label{div_u22_u23}
    & \qquad +\frac{3 \kappa R^3}{16\rho_0\nu}\pfrac{R}{s_1}\frac{p^0}{s_1}(3s_3^2-1)s_3\cos s_2 + \frac{R^2}{2\rho_0\nu}\pfrac{R}{s_1}\pfrac{p^1}{s_1}s_3^2=:g.    
 \end{align} 
  Let	be $\mathbf{U}^2 = (u^2_2, u^2_3)$. Using the change of variable (\ref{def_z}), 
  we obtain from the equations (\ref{lap_u22}), (\ref{lap_u23}), (\ref{div_u22_u23}) and the boundary conditions (\ref{bc_terms}), that $(\mathbf{U}^2,p^3)$ solves the following problem, 
	\begin{equation*}	
		\left \{ \begin{array}{lcl}
		\displaystyle \Delta_\mathbf{z} \mathbf{U}^2 = \frac{R}{\rho_0 \nu}\nabla_\mathbf{z} p^3 + \mathbf{F}&\textrm{in} &\omega, 
		\\  \displaystyle \mathrm{div}\, \mathbf{U}^2 = g&\textrm{in} &\omega, 
		\\  \displaystyle 
		\mathbf{U}^2 = \mathbf{0} &\textrm{on} &\partial \omega, 
		\end{array} \right .
		\end{equation*}
where,
\begin{align*}
\mathbf{F}&:= \left( \frac{\kappa R^6}{16 \rho_0^2 \nu^3} \left(\frac{\d p^0}{\d s_1}\right)^2 \left( (z_2^2 + z_3^2)^2 +1 \right)   + \frac{\kappa' R^4}{4 \rho_0 \nu}\frac{\d p^0}{\d s_1} + \frac{5 R^2 \kappa}{8 \rho_0 \nu} \frac{\d}{\d s_1}\left(R^2 \frac{\d p^0}{\d s_1} \right) \right.
 \\ \nonumber
 & \qquad {} + \left(- \frac{\kappa R^6}{8 \rho^2_0 \nu^3} \left(\frac{\d p^0}{\d s_1}  \right)^2 - \frac{9R^4 \kappa}{16 \rho_0 \nu}\frac{\d^2 p^0}{\d s_1^2} - \frac{\kappa' R^4}{4\rho_0 \nu}\frac{\d p^0}{\d s_1} \right){(z_2^2 + z_3^2)} 
\\ 
&\qquad \left. {} -\frac{\kappa R^4 }{8 \rho_0 \nu} \frac{\d^2 p^0}{\d s_1^2}z_2^2 
 - \frac{R^2}{\nu}b_{02}, -\frac{\kappa\tau R^4}{4 \rho_0 \nu} \frac{\d p^0}{\d s_1} (z_2^2 + z_3^2 -1) -\frac{2R^4\kappa}{16 \rho_0\nu}\frac{\d^2p^0}{\d s_1^2}z_2z_3 
   -\frac{R^2}{\nu}b_{03}   \right),
\end{align*}
and $g$ is given by (\ref{funcion_g}) (from the definition of $g$ in (\ref{div_u22_u23}), and using 
(\ref{def_z}) after some simplification (see (\ref{funcion_g2})), we can obtain that $g$ is given by (\ref{funcion_g})).

Applying Theorem \ref{Teman}, this problem has a unique solution if the compatibility condition 
\begin{equation}\label{cond_compU2}
\int_\omega g\, dz_2dz_3= 0,
\end{equation}
is fulfilled, or equivalently, using the change of variable (\ref{def_z}), if
\begin{equation*}
\int_0^{2\pi}\int_0^1 s_3 g\, ds_3 ds_2= 0.
\end{equation*}

Taking into account the definition of g (see the right-hand side of (\ref{div_u22_u23})),
we obtain that
\begin{align*}
\int_0^{2\pi}\int_0^1 s_3 g\, ds_3 ds_2&= \frac{\pi R }{8 \rho_0\nu}\pfrac{}{s_1}\left(R^2 \pfrac{p^1}{s_1} \right) + \frac{\pi R^2}{4\rho_0\nu}\pfrac{R}{s_1}\pfrac{p^1}{s_1} \\
&= \frac{\pi R^2}{2\rho_0\nu}\pfrac{R}{s_1}\pfrac{p^1}{s_1} + 
\frac{\pi R^3 }{8 \rho_0\nu}\frac{\d^2 p^1}{\d s_1^2}.
\end{align*}
From (\ref{edp_p1}) we deduce that
\begin{align*}
4R^3\pfrac{R}{s_1}\pfrac{p^1}{s_1} + R^4\pfrac{^2p^1}{s_1^2}=0,
\end{align*}
so the compatibility condition (\ref{cond_compU2}) is verified and the problem (\ref{eqU2}) has uniqueness of solution. Furthermore, since $g$ and $\mathbf{F}$ are polynomial on $s_3$ (see (\ref{funcion_g})--(\ref{funcion_F})), the solution of (\ref{eqU2}) must be also polynomial on $s_3$ 
and we can compute it explicitly (see appendix \ref{apend.U2} for the details).
  
\end{enumerate}
\end{proof}

\section{Behavior of the wall of the pipe} \setcounter{equation}{0}

We need to close the equations of the model presented here (see theorem \ref{terminos_identificados}) with a law describing the behavior of the wall
of the pipe, that is, with a equation that allows us to determine $R$. There are different possibilities: a rigid wall, elastic or viscoelastic laws, etc.

The simplest case is when we consider that the wall of the pipe is rigid. We have assumed 
that $R(t, s_1)$ is given in (\ref{eq-phi-1-2})--(\ref{aplicacion}), so it is enough to suppose that 
\begin{displaymath}
\frac{\d R}{\d t} = 0
\end{displaymath}
to obtain a rigid wall. If we consider the steady case and a rigid wall of the pipe, our model reduces to the model obtained in \cite{Paloka}.

Other simple case is when we consider an algebraic elastic law (see \cite{Quarteroni}): 
\begin{equation}
\label{law}
p^0 - p_e = \frac{Eh_0}{R_0^2}(R-R_0)
\end{equation} 
where $E$ is the
Young modulus of the wall, $h_0$ its thickness, $R_0$ the radius of
the cross-section at rest, and $p_e$ is the external pressure. 

More complex (elastic or viscoelastic) laws can also be considered (see \cite{Quarteroni} again).

\section{Some numerical examples} \label{sec-4} \setcounter{equation}{0}

In this section we shall present some numerical examples in some representative cases 
in order to illustrate 
the behavior of the approximated solution obtained in the previous sections. 

We start plotting the main tangential velocity $u^0_1$ and its corrections $u^1_1$ and $u^2_1$. 
We observe in Figure \ref{u01plot} that  $u^0_1$ is a Poiseuille flow (other works as \cite{Hydon,Panasenko} have also shown this behavior).

\begin{figure}[H]
 \begin{center}
  {\includegraphics[width=.35\linewidth]{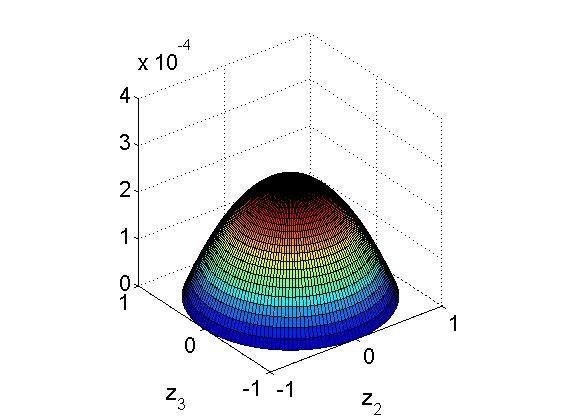}}
 \caption{Plot of $u_1^0 $ field.}\label{u01plot}
 \end{center}
\end{figure}

 In Figure \ref{u11plot} we can see that $u^1_1$ is a correction of $u^0_1$ that takes into account the curvature of the middle line (the fluid is faster in the side of the cross section of the pipe pointing to $\mathbf{N}$). 

\begin{figure}[H]
 \begin{center}
{\includegraphics[width=.35\linewidth]{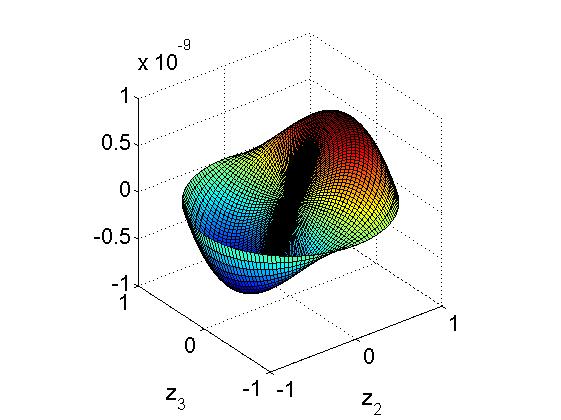}}
 \caption{Plot of $u_1^1 $ field.}\label{u11plot}
 \end{center}
\end{figure}

The correction of order two $u^2_1$, has a complex dependence on various terms (see (\ref{u_1^2})), but it is also similar to a Poiseuille flow (see Figure \ref{u21plot}). 

\begin{figure}[H]
 \begin{center}
  {\includegraphics[width=.35\linewidth]{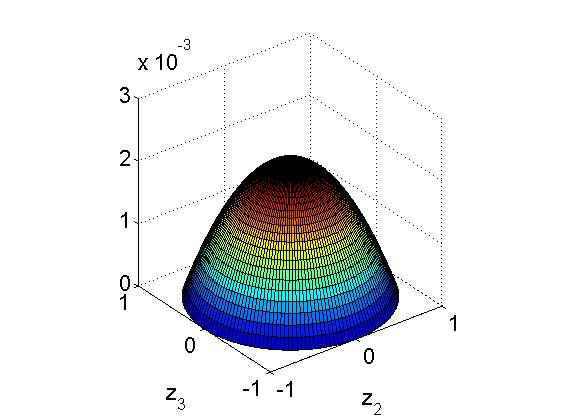}}
 \caption{Plot of $u_1^2 $ field.}\label{u21plot}
 \end{center}
\end{figure}

We have seen at (\ref{U^0}) that, at order zero, the transversal velocity is zero, so the tangential 
velocity is dominant. The first order correction, $\mathbf{U}^1 = (u^1_2, u^1_3)$, is related with the expansion and contraction of the pipe wall in radial direction. We can see in Figure \ref{U1plot} 
different cases depending on the value of $\frac{\partial p^0}{\partial s_1}$ (dp1), 
$\frac{\partial^2 p^0}{\partial s_1^2}$ (dp2) and $\frac{\partial r}{\partial s_1}$ (dr). 

\begin{figure}[h]
 \begin{center}
 {\includegraphics[width=.49\linewidth]{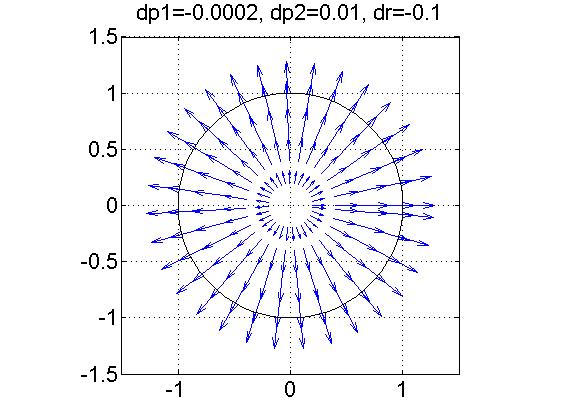}}
{\includegraphics[width=.49\linewidth]{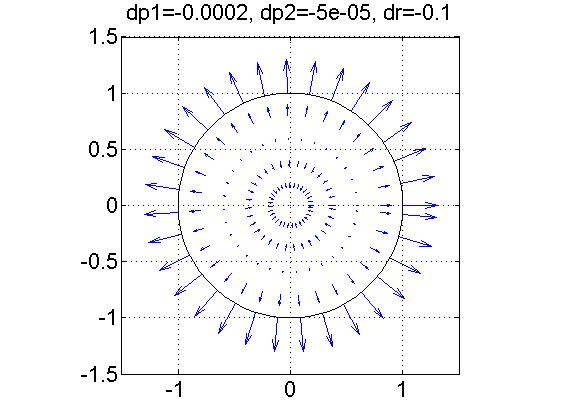}}
  {\includegraphics[width=.49\linewidth]{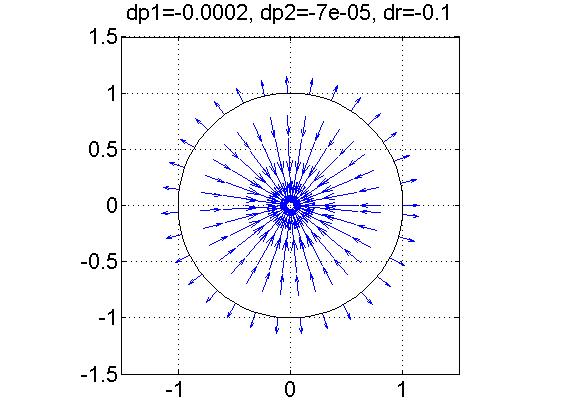}}
  {\includegraphics[width=.49\linewidth]{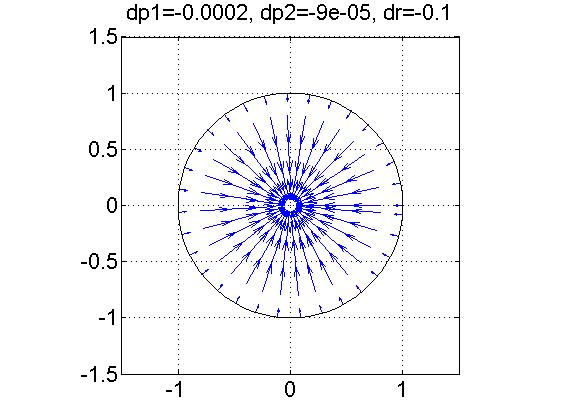}}
  \caption{Plot of $(u_2^1, u_3^1)$ field.}\label{U1plot}
 \end{center}
\end{figure}

The second order correction of transversal velocity, $\mathbf{U}^2=(u^2_2, u^2_3)$, is related 
with the recirculation of the fluid in the cross section of the pipe, as we can see in Figure \ref{U2plot}, 
where we show different cases depending on the curvature (k), its derivative (dk) and the torsion 
(tau) of the middle line of the pipe. 

\begin{figure}[h]
 \begin{center}
{\includegraphics[width=.49\linewidth]{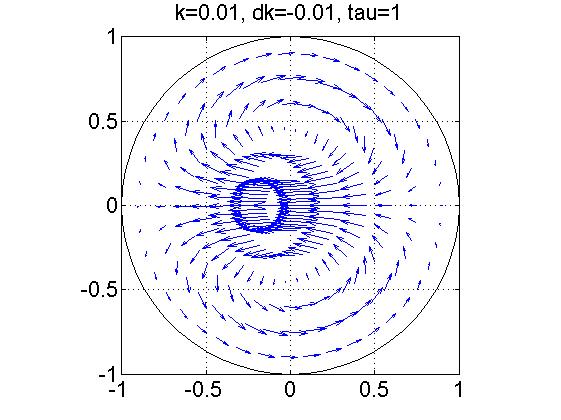}}
 {\includegraphics[width=.49\linewidth]{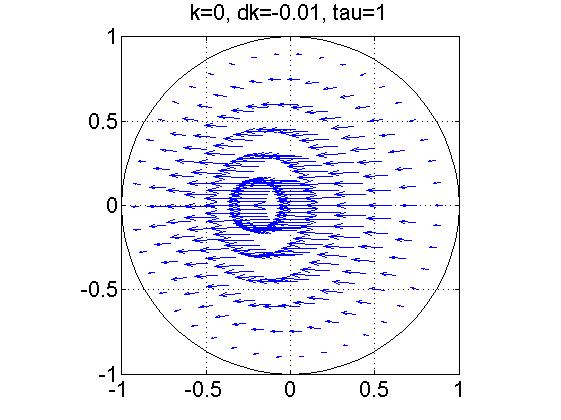}}
 {\includegraphics[width=.49\linewidth]{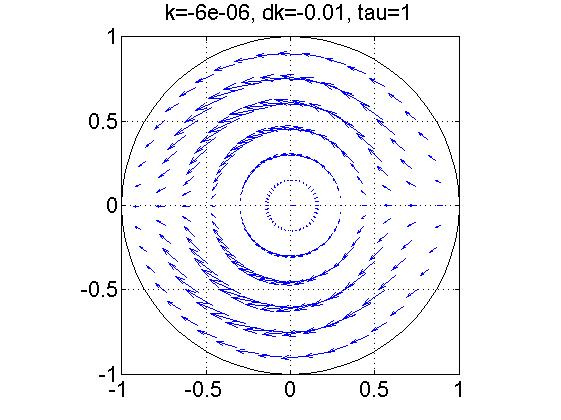}}
 {\includegraphics[width=.49\linewidth]{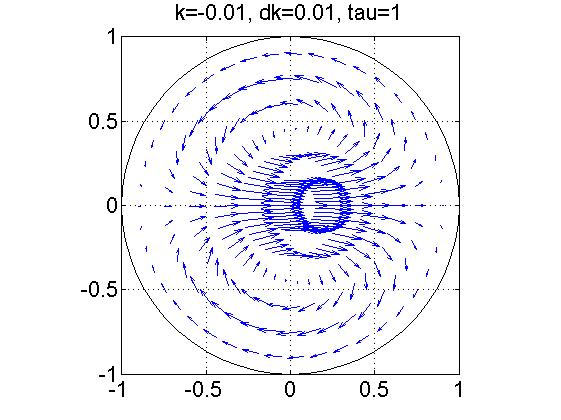}}
 \caption{Plot of $(u_2^2, u_3^2)$ field.}\label{U2plot}
 \end{center}
\end{figure}

\section{Conclusions} \label{sec-5} \setcounter{equation}{0}

A transient model for a newtonian fluid through a curved pipe with moving walls has been obtained. The asymptotic expansions have allowed us to find out the main components of velocity and their corrections. Furthermore, our model reduces to the obtained in \cite{Paloka}, when steady case and rigid walls are considered. Plots presented here (see figures \ref{u01plot}--\ref{U2plot}) compare very well with real patterns of fluid flow through a curved pipe and agree with the data available in the literature. A simple 
algebraic elastic law for the pipe wall has been considered in (\ref{law}), but other more general laws can be used.

\section{Acknowledgements}

This research was partially supported by Ministerio de
Econom\'ia y Competitividad under grant MTM2012-36452-C02-01 with the participation of FEDER.

\clearpage

\begin{appendices}

\section{Computing $\mathbf{U}^1$ and $p^2$} \setcounter{equation}{0}\label{apend.U1}

Let us consider $(\mathbf{U^1}, p^2)$, the solution of problem (\ref{problema_U1}), 
\begin{equation*} \label{problema-U1-ap-a}
\left \{ \begin{array}{l}
\displaystyle\Delta_\mathbf{z}\mathbf{U^1}=\frac{R}{\nu\rho_0}\nabla_\mathbf{z}p^2 
\quad \textrm{in}\ \omega,
\\ \displaystyle \mathrm{div}_\mathbf{z}\mathbf{U^1}=\frac{R}{4 \rho_0 \nu}\left(\frac{\d}{\d s_1}\left(R^2 \frac{\d p^0}{\d s_1}\right) - (z_2^2+z_3^2)R^2\frac{\d^2 p^0}{\d s_1^2} \right)=:g^1 
\quad \textrm{in}\ \omega, 
\\ \displaystyle 
\mathbf{U^1} = \frac{\d R}{\d t}(\cos s_2, \sin s_2)=:\mathbf{\bphi^1} \quad \textrm{on} \ \d \omega,
\end{array} \right.
\end{equation*}
that we have seen in the proof of theorem \ref{terminos_identificados} that 
has a unique solution (in the case of $p^2$, up to an arbitrary function of $t$ and $s_1$). In order to compute $\mathbf{U^1}$ and $p^2$, we 
are going to consider some easier auxiliary problems. First, let us consider the problem 
		\begin{equation} \label{problema_phi}
	\left \{ \begin{array}{l}
			\displaystyle\Delta_\mathbf{z}\varphi=g^1 \ \textrm{in} \ \omega,
		\\ \displaystyle \frac{\d \varphi}{\d \mathbf{n}}= \bphi^1\cdot \mathbf{n} 
		= \frac{\d R}{\d t} \ \textrm{on} \ \d\omega,
		\end{array} \right.
		\end{equation}
		which has a unique solution (up to an arbitrary function of $t$ and $s_1$), since the compatibility condition (\ref{cond_comp_U1}) is verified. This problem can be written, 
using change of variable (\ref{def_z}), as follows 
\begin{equation} \label{eq-pb-phi-s3}
\left \{ \begin{array}{l}
\displaystyle \frac{1}{s_3^2} \pfrac{^2\varphi}{s_2^2}+ \frac{1}{s_3}\pfrac{\varphi}{s_3} + \pfrac{^2\varphi}{s_3^2} = \frac{R}{4 \rho_0 \nu}\left(\frac{\d}{\d s_1}\left(R^2 \frac{\d p^0}{\d s_1}\right) - s_3^2R^2\frac{\d^2 p^0}{\d s_1^2} \right)\quad \textrm{in}\ \omega, \\[10pt]
\displaystyle \pfrac{\varphi}{s_3}=\pfrac{R}{t}\quad \textrm{on}\ \d \omega.
\end{array} \right.
\end{equation}	

If we look for a solution of the form  
\begin{align*}
\varphi=a(t,s_1) s_3^4+ b(t,s_1)s_3^2 +c(t,s_1), 
\end{align*}	
we can identify $a$ and $b$ substituting in (\ref{eq-pb-phi-s3}). 
Taking into account (\ref{p^0}) to verify that the boundary condition is fulfilled,  
we find that 
\begin{align} \label{eq-phi-s3}
\varphi(t, s_1, s_2, s_3)= \frac{s_3^2R}{16 \rho_0 \nu}\left( \frac{\d}{\d s_1}\left(R^2 \frac{\d p^0}{\d s_1}\right) - \frac{s_3^2R^2}{4}\frac{\d^2 p^0}{\d s_1^2} \right) + c(t,s_1) 
\end{align}
is the solution of (\ref{eq-pb-phi-s3}). 
Using again (\ref{def_z}), we obtain that 
\begin{align} \label{eq-phi-z2-z3}
\varphi(t, s_1, z_2, z_3)= \frac{(z_2^2+z_3^2)R}{16 \rho_0 \nu}\left( \frac{\d}{\d s_1}\left(R^2 \frac{\d p^0}{\d s_1}\right) - (z_2^2+z_3^2)\frac{R^2}{4}\frac{\d^2 p^0}{\d s_1^2} \right) + c(t,s_1).	
\end{align}

		Let be $\mathbf{V} = \mathbf{U^1}- \nabla_{\mathbf{z}}\varphi$, and ${\bchi} =(-\sin s_2, \cos s_2)$ the unit  tangent vector on $\d\omega$. Then, from (\ref{problema_U1}) and (\ref{problema_phi}), we obtain that $(\mathbf{V},p^2)$ satisfies the problem
\begin{equation} \label{problema V}
		\left \{ \begin{array}{l}
		\displaystyle\Delta_\mathbf{z}\mathbf{V}= \frac{R}{\nu \rho_0} \nabla_{\mathbf{z}} p^2- \Delta_{\mathbf{z}}(\nabla_{\mathbf{z}}\varphi)   \quad \textrm{in}\ \omega,
		\\ \displaystyle \mathrm{div}_\mathbf{z}\mathbf{V}=0 \quad \textrm{in}\ \omega, 
		\\ \displaystyle 
		\mathbf{V\cdot n} =\mathbf{U^1\cdot n} - \nabla_{\mathbf{z}}\varphi\cdot \mathbf{n}=0 \quad \textrm{on $\d\omega$},
		\\ \displaystyle
		\mathbf{V\cdot \bchi} =\mathbf{U^1\cdot \bchi} - \nabla_{\mathbf{z}}\varphi\cdot \mathbf{\bchi}= - \frac{\d \varphi}{\d \boldmath{\bchi}}\quad \textrm{on $\d\omega$}.
		\end{array} \right.
\end{equation}

Since on $\d \omega$ we have that 
$\mathbf{U^1\cdot \bchi}=0$, $\displaystyle \nabla_{\mathbf{z}}\varphi\cdot \mathbf{\bchi} = 
\frac{\d \varphi}{\d \boldmath{\bchi}} = \pfrac{\varphi}{s_2}=0$, 
and $\varphi$ verifies in $\omega$ that 
\begin{align*}
\nabla_\mathbf{z}\varphi &= \frac{2R}{16 \rho_0 \nu}\left( \frac{\d}{\d s_1}\left(R^2 \frac{\d p^0}{\d s_1}\right) - (z_2^2+z_3^2)\frac{R^2}{2}\frac{\d^2 p^0}{\d s_1^2} \right)\left(z_2,z_3\right), \\
\Delta_\mathbf{z}\left(\nabla_\mathbf{z}\varphi \right) &= -\frac{R^3}{2\rho_0 \nu} \pfrac{^2 p^0}{s_1^2} (z_2,z_3),
\end{align*}
we obtain that 
$(\mathbf{V}, p^2)$ satisfies the problem 
\begin{equation} \label{eq-V-U1}
		\left \{ \begin{array}{l}
		\displaystyle\Delta_\mathbf{z}\mathbf{V}= \frac{R}{\nu \rho_0} \nabla_{\mathbf{z}} p^2+ \frac{R^3}{2\rho_0 \nu} \pfrac{^2 p^0}{s_1^2} (z_2,z_3)  \quad \textrm{in}\ \omega,
		\\ \displaystyle \mathrm{div}_\mathbf{z}\mathbf{V}=0 \quad \textrm{in}\ \omega, 
		\\ \displaystyle 
		\mathbf{V} =\bcero \quad \textrm{on $\d\omega$},
		\end{array} \right.
\end{equation}

Then, by Theorem \ref{Teman}, problem (\ref{eq-V-U1}) has a unique solution (up to an 
arbitrary function of $t$ and $s_1$, in the case of $p^2$), which expression is
\begin{align} \nonumber
\mathbf{V}&=\bcero, 
\\ \label{p_0^2}
p^2&=- \frac{R^2}{4}\pfrac{^2p^0}{s_1^2}\left(z_2^2 + z_3^2 \right) + p_0^2(t,s_1),
\end{align}
where $p_0^2(t,s_1)=c(t,s_1)$ is a smooth function, which is determined in (\ref{eq-p02}). Finally,  since $\mathbf{U^1}= \bV + \nabla_{\mathbf{z}}\varphi$, we have that
\begin{align*}
\mathbf{U^1}= \frac{R}{16 \rho_0 \nu}\left( 2\frac{\d}{\d s_1}\left(R^2 \frac{\d p^0}{\d s_1}\right) - (z_2^2+z_3^2)R^2\frac{\d^2 p^0}{\d s_1^2} \right)\left(z_2,z_3\right).
\end{align*}

\section{Computing $\mathbf{U}^2$ and $p^3$} \setcounter{equation}{0}\label{apend.U2}

Let us consider $(\mathbf{U}^2, p^3)$, solution of the problem 
	\begin{equation} \label{eq-U2-apendice-B}
		\left \{ \begin{array}{lcl}
		\displaystyle \Delta_\mathbf{z} \mathbf{U}^2 = \frac{R}{\rho_0 \nu}\nabla_\mathbf{z} p^3 + \mathbf{F}&\textrm{in} &\omega, 
		\\  \displaystyle \mathrm{div}\, \mathbf{U}^2 = g&\textrm{in} &\omega, 
		\\  \displaystyle 
		\mathbf{U}^2 = \mathbf{0} &\textrm{on} &\partial \omega, 
		\end{array} \right .
		\end{equation}
where $\mathbf{F}$ and $g$ are given, respectively, by (\ref{funcion_F}) and (\ref{div_u22_u23}).

In order to compute $(\mathbf{U}^2, p^3)$, we shall consider a decomposition of this problem in some easier ones (as done to compute $(\mathbf{U}^1, p^2)$ in appendix \ref{apend.U1}). 

First, let us consider the problem,
\begin{equation} \label{problema_phi2}
	\left \{ \begin{array}{l}
			\displaystyle\Delta_\mathbf{z}\varphi=g \ \textrm{in} \ \omega,
		\\ \displaystyle \frac{\d \varphi}{\d \mathbf{n}}=0 \ \textrm{on} \ \d\omega,
		\end{array} \right.
		\end{equation}
		which has unique solution, since the compatibility condition (\ref{cond_compU2}) is verified. 
		
Simplifying from (\ref{div_u22_u23}), we have
\begin{align}\nonumber
g&= \left(  - \frac{\kappa R^4}{2 \rho_0 \nu}\pfrac{^2p^0}{s_1^2} - \frac{3 \kappa' R^4}{16\rho_0 \nu}\pfrac{p^0}{s_1} \right) s_3^3\cos s_2  -\frac{3\kappa\tau R^4}{16\rho_0\nu}\pfrac{p^0}{s_1}s_3^3\sin s_2 
\\ \nonumber
&\quad+ \left(  \frac{9\kappa R^3}{8 \rho_0 \nu}\pfrac{R}{s_1}\pfrac{p^0}{s_1} + \frac{9 \kappa R^4}{16 \rho_0 \nu} \pfrac{^2p^0}{s_1^2} + \frac{3 \kappa' R^4}{16 \rho_0 \nu} \pfrac{p^0}{s_1}             \right)s_3\cos s_2
\\ \label{funcion_g2}
& \quad + \frac{3\kappa\tau R^4}{16\rho_0\nu}\pfrac{p^0}{s_1} s_3 \sin s_2 -\frac{R^3}{4\rho_0\nu}\pfrac{^2p^1}{s_1^2}s_3^2 + \frac{R}{4\rho_0\nu}\pfrac{}{s_1}\left( R^2 \pfrac{p^1}{s_1} \right), 
\end{align}
that, using (\ref{def_z}), can be written as in (\ref{funcion_g}). From (\ref{problema_phi2}), 
and using again (\ref{def_z}), we have that $\varphi$ is solution of 
\begin{equation}\label{et19}
\left \{ \begin{array}{l}
\displaystyle \frac{1}{s_3^2} \pfrac{^2\varphi}{s_2^2}+ \frac{1}{s_3}\pfrac{\varphi}{s_3} + \pfrac{^2\varphi}{s_3^2}=g \quad \textrm{in}\ \omega, \\[10pt]
\displaystyle \pfrac{\varphi}{s_3} = 0 \quad \textrm{on}\ \d \omega.
\end{array} \right.
\end{equation}

If we look for a solution of the form 
\begin{align}\nonumber
\varphi&=a(t,s_1) s_3^5\cos s_2 + b(t,s_1)s_3^5\sin s_2 + c(t,s_1)s_3^3\cos s_2 + d(t,s_1)s_3^3\sin s_2 
\\ \label{phi_U2}
& \quad
+ e(t,s_1)s_3 \cos s_2 
+ f(t,s_1)s_3\sin s_2 + j(t,s_1)s_3^4 + h(t,s_1)s_3^2 + i(t,s_1),
\end{align}	
where $a,b, c, d, e, f, g, h, i$ are smooth unknown functions, 
and we substitute in (\ref{et19}), we find that 
\begin{align} \nonumber
&\varphi(t,s_1,s_2,s_3)= \left( - \frac{R^4}{384\rho_0\nu}\left( 8\kappa\pfrac{^2p^0}{s_1^2} + 3\kappa'\pfrac{p^0}{s_1}    \right)\cos s_2 -\frac{\kappa\tau R^4}{128\rhonu}\dpc \sin s_2 \right)s_3^5 
\\ \nonumber
& \qquad  {} - \frac{R^3}{64\rhonu}\dpud   s_3^4 + \left(\frac{3 R^3}{128 \rhonu} \left( 6\kappa\dR\dpc + 3\kappa R\dpcd + \kappa'R\dpc  \right)\cos s_2  \right.
\\\nonumber
&\qquad \left. {} + \frac{3\kappa\tau R^4}{128 \rhonu}\dpc\sin s_2   \right) s_3^3 + \frac{R}{16 \rhonu}\pfrac{}{s_1}\left(R^2 \dpu\right)s_3^2
\\\nonumber
&\qquad {} + \left(   \left( \frac{5 R^4}{384 \rhonu}\left( 8\kappa \dpcd + 3\kappa'\dpc  \right) - \frac{9 R^3}{128\rhonu}\left(6\kappa\dR\dpc + \kappa'R\dpc \right. \right. \right.
\\
& \qquad \left. \left. \left. {} + 3\kappa R \dpcd\right)   \right)\cos s_2  -\frac{4 \kappa\tau R^4}{128 \rhonu} \dpc\sin s_2 \right) s_3 + i(t,s_1), \label{phi_2}
\end{align} 
where equation (\ref{edp_p1}) has been used to guarantee that the boundary condition in (\ref{et19}) 
is verified. Coming back to the local cartesian coordinates, we can write (\ref{phi_2}) 
\begin{align} \nonumber
&\varphi(t,s_1,z_2,z_3) 
= \left( - \frac{R^4}{384\rho_0\nu}\left( 8\kappa\pfrac{^2p^0}{s_1^2} 
+ 3\kappa'\pfrac{p^0}{s_1}   \right)z_2 -\frac{\kappa\tau R^4}{128\rhonu}\dpc z_3 \right.
\\ \nonumber 
&\qquad \left. {} - \frac{R^3}{64\rhonu}\dpud \right) (z_2^2 + z_3^2)^2 
+ \left(\frac{3 R^3}{128 \rhonu} \left( 6\kappa\dR\dpc + 3\kappa R\dpcd + \kappa'R\dpc  \right)z_2 
\right. \\ \nonumber
& \qquad  \left. {}  + \frac{3\kappa\tau R^4}{128 \rhonu}\dpc z_3   
+ \frac{R}{16 \rhonu}\pfrac{}{s_1}\left(R^2 \dpu \right) \right ) (z_2^2 + z_3^2)
\\ \nonumber
&\qquad {} + \left( \frac{5 R^4}{384 \rhonu}\left( 8\kappa \dpcd + 3\kappa'\dpc  \right) - \frac{9 R^3}{128\rhonu}\left(6\kappa\dR\dpc + \kappa'R\dpc + 3\kappa R \dpcd\right)   \right)z_2 
\\
& \qquad   {}   -\frac{4 \kappa\tau R^4}{128 \rhonu} \dpc z_3  + i(t,s_1). \label{phi_2-b}
\end{align} 

Now, let us consider $\mathbf{V} = \mathbf{U^2}- \nabla_{\mathbf{z}}\varphi$, and ${\bchi}=(-\sin s_2, \cos s_2)$ the unit  tangent vector on $\d\omega$. Then, from (\ref{eq-U2-apendice-B}) and (\ref{problema_phi2}), we obtain that $(\mathbf{V},p^3)$ satisfies the problem
\begin{equation} \label{problema V2}
		\left \{ \begin{array}{l}
		\displaystyle\Delta_\mathbf{z}\mathbf{V}= \frac{R}{\rho_0 \nu} \nabla_{\mathbf{z}} p^3 + \mathbf{F}- \Delta_{\mathbf{z}}(\nabla_{\mathbf{z}}\varphi)\quad \textrm{in}\ \omega,
		\\ \displaystyle \mathrm{div}_\mathbf{z}\mathbf{V}=0\quad \textrm{in}\ \omega, 
		\\ \displaystyle 
		\mathbf{V\cdot n} =\mathbf{U^2\cdot n} - \nabla_{\mathbf{z}}\varphi\cdot \mathbf{n}=0 \quad \textrm{on $\d\omega$},
		\\ \displaystyle
		\mathbf{V\cdot \bchi} =\mathbf{U^2\cdot \bchi} - \nabla_{\mathbf{z}}\varphi\cdot \mathbf{\bchi}= - \frac{\d \varphi}{\d \boldmath{\bchi}}\quad \textrm{on $\d\omega$}
		\end{array} \right.
\end{equation}

Let us consider an arbitrary smooth function $\psi$ and let be $\bW=\bV- \left( \frac{\d \psi}{\d z_3}, -\frac{\d \psi}{\d z_2} \right)$. Then $\bW$ is solution of the problem 
		\begin{equation}\label{problema_W}
		\left \{ \begin{array}{l}
			\displaystyle\Delta_\mathbf{z}\mathbf{\bW}= \frac{R}{\rho_0 \nu} \nabla_{\mathbf{z}} p^3 + \mathbf{F} - \Delta_{\mathbf{z}}(\nabla_{\mathbf{z}}\varphi)-\Delta_\mathbf{z}\left( \frac{\d \psi}{\d z_3}, -\frac{\d \psi}{\d z_2} \right)\quad \textrm{in}\ \omega,
		\\ \displaystyle \mathrm{div}_\mathbf{z}\mathbf{\bW}=0\quad \textrm{in}\ \omega,
		\\ \displaystyle 
		\mathbf{\bW\cdot n} =-\left( \frac{\d \psi}{\d z_3}, -\frac{\d \psi}{\d z_2} \right)\mathbf{n}  =-\nabla_{\mathbf{z}}\psi\cdot\bchi=- \frac{\d \psi}{\d \bchi} \quad \textrm{on $\d\omega$},
		\\ \displaystyle
		\mathbf{\bW\cdot \bchi} = - \frac{\d \varphi}{\d \boldmath{\bchi}}+ \left( \frac{\d \psi}{\d z_3}, -\frac{\d \psi}{\d z_2} \right) \cdot\bchi
		= - \frac{\d \varphi}{\d \boldmath{\bchi}}+ \frac{\d \psi}{\d \mathbf{n}}
		\quad \textrm{on $\d\omega$}.
		\end{array} \right.
		\end{equation}
		
If we are able to find a function $\psi$ such that 
\begin{equation} \label{eq-cond-psi}
\frac{\d \psi}{\d \bchi}=0 \quad \textrm{on}\ \d \omega, \qquad \frac{\d \psi}{\d \mathbf{n}}=  \frac{\d \varphi}{\d \boldmath{\bchi}} 
\quad \textrm{on}\ \d \omega,
\end{equation}
then problem (\ref{problema_W}) will be equivalent to 
		\begin{equation}\label{problema-W-2}
		\left \{ \begin{array}{l}
			\displaystyle\Delta_\mathbf{z}\mathbf{\bW}= \frac{R}{\rho_0 \nu} \nabla_{\mathbf{z}} p^3 + \mathbf{F} - \Delta_{\mathbf{z}}(\nabla_{\mathbf{z}}\varphi)-\Delta_\mathbf{z}\left( \frac{\d \psi}{\d z_3}, -\frac{\d \psi}{\d z_2} \right)\quad \textrm{in}\ \omega,
		\\ \displaystyle \mathrm{div}_\mathbf{z}\mathbf{\bW}=0\quad \textrm{in}\ \omega,
		\\ \displaystyle 
		\mathbf{\bW} =\mathbf{0} \quad \textrm{on $\d\omega$},
		\end{array} \right.
		\end{equation}
and we shall have, by Theorem \ref{Teman}, uniqueness of solution $(\bW, p^3)$ ($\bW$ is unique and $p^3$ is unique up to a function depending on $t$ and $s_1$). 

The next step is, following (\ref{eq-cond-psi}), finding a function $\psi$ such that
\begin{align}
\frac{\d \psi}{\d \bchi}= \frac{\d \psi}{\d s_2}=0 \quad \textrm{on}\ \d \omega, \label{eq-d-psi-s2} \\
\frac{\d \psi}{\d \mathbf{n}} = \frac{\d \psi}{\d s_3} =  \frac{\d \varphi}{\d \boldmath{\bchi}} = 
\frac{\d \varphi}{\d s_2} \quad \textrm{on}\ \d \omega.  \label{eq-d-psi-s3}
\end{align}
From (\ref{phi_2}) and (\ref{edp_p1}), we deduce that 
\begin{align*}
&\pfrac{\psi}{s_3}= \left( -\frac{4 R^4}{384\rhonu}\left(8\kappa \dpcd + 3\kappa'\dpc \right) 
\right. \\
&\qquad \left. {} + \frac{6 R^3}{128\rhonu} \left(6 \kappa \dR\dpc + \kappa R\dpc + 3\kappa R \dpcd \right)   \right) \sin s_2
- \frac{2\kappa\tau R^4}{128\rhonu} \dpc\cos s_2 
\quad \textrm{on}\ \d \omega,
\end{align*}
so, one possible election for $\psi$ is 
\begin{align*}
\psi(t, s_1, s_2, s_3)&= \frac{R^3}{384\rhonu} \left( \left(22\kappa R\dpcd + 6\kappa'R\dpc + 108\kappa\dR\dpc\right)\sin s_2 \right.
\\
& \qquad \left. -6 \kappa\tau R \dpc\cos s_2    \right)\frac{s_3(s_3^2-1)}{2},
\end{align*}
that, applying (\ref{def_z}), can be written 
\begin{align} \label{eq-psi-apendice-B}
\psi(t,s_1,z_2,z_3)= \left( \psi_2(t,s_1) z_2 + \psi_3(t,s_1) z_3 \right)\frac{z_2^2 + z_3^2 -1}{2}
\end{align}
where,
\begin{align}
\psi_3(t,s_1)&= \frac{R^3}{384\rhonu} \left(22\kappa R \dpcd + 6\kappa'R \dpc + 108\kappa\dR\dpc \right), \label{eq-psi-3} \\
\psi_2(t,s_1)&=- \frac{\kappa\tau R^4}{64\rhonu}  \dpc. \label{eq-psi-2}
\end{align}

Then, we have
\begin{align} \label{psi_U}
\left( \frac{\d \psi}{\d z_3}, -\frac{\d \psi}{\d z_2} \right)=\left( \frac{\psi_3}{2} \left( z_2^2 + 3 z_3^2 - 1\right) + \psi_2 z_2 z_3,  -\frac{\psi_2}{2} \left( 3z_2^2 +  z_3^2 - 1\right) - \psi_3 z_2 z_3  \right),
\end{align}
and 
\begin{align}
-\Delta_\mathbf{z}\left( \frac{\d \psi}{\d z_3}, -\frac{\d \psi}{\d z_2} \right)= 
(-4\psi_3, 4\psi_2). \label{eq-laplaciano-rot-psi}
\end{align}

Hence, from (\ref{problema-W-2}), $\bW$ is solution of the problem 
\begin{equation}\label{problema_W2}
		\left \{ \begin{array}{l}
		\displaystyle\Delta_\mathbf{z}\bW= \nabla_{\mathbf{z}} q^2 + \mathbf{F}   
		\quad \textrm{in}\ \omega,
		\\ \displaystyle \mathrm{div}_\mathbf{z}\bW=0 \quad \textrm{in}\ \omega, 
		\\ \displaystyle 
		\bW =\mathbf{0}\quad \textrm{on $\d\omega$},
		\end{array} \right.
		\end{equation} 
where 
\begin{align}\label{q2}
q^2 = \frac{R}{\rho_0 \nu} p^3 - g - 4\psi_3z_2 + 4\psi_2z_3 + q_0^2,
\end{align}
with $q_0^2 = q_0^2(t,s_1)$ an arbitrary smooth function of $t$ and $s_1$. 
From Theorem \ref{Teman}, we have the existence and uniqueness (up to an arbitrary function 
of $t$ and $s_1$, in the case of $q^2$) of $(\bW, q^2)$. 

Once we have computed $\mathbf{W}$ and $q^2$ (see below), 
we obtain $\mathbf{U}^2$ and $p^3$ in the following way,
\begin{align}
&\mathbf{U}^2 = \mathbf{W} + \nabla_{\mathbf{z}}\varphi+ \left( \pfrac{\psi}{z_3}, -\pfrac{\psi}{z_2}  \right), \\
&p^3 = \frac{\rho_0 \nu}{R} \left ( q^2 + g + 4\psi_3 z_2 - 4\psi_2 z_3 - q_0^2 \right ), 
\end{align}
where $\varphi$, $\psi$, $g$, $\psi_3$ and $\psi_2$ are given, respectively, by 
(\ref{phi_2-b}), (\ref{eq-psi-apendice-B}), (\ref{funcion_g}), (\ref{eq-psi-3}) and (\ref{eq-psi-2}).

Let us now compute $(\mathbf{W}, q^2)$. In order to make such computation, let us remark that $\mathbf{F}=(F_2, F_3)$ is polynomial in $z_2$ and $z_3$. 
Developing, from (\ref{funcion_F}), $F_2$ and $F_3$ in powers of $z_2$ and $z_3$, 
we obtain that 
\begin{align*}
F_2&=f_2^{00}+ f_2^{20}z_2^2 + f_2^{02}z_3^2 + f_2^{22}z_2^2z_3^2 + f_2^{40}z_2^4 + f_2^{04}z_3^4,
\\
F_3&=f_3^{00}+   f_3^{11}z_2z_3 + f_3^{20}z_2^2 + f_3^{02}z_3^2,
\end{align*}
where $f_\alpha^{mn}$ denotes the coefficient that multiplies $z_2^mz_3^n$ in $F_\alpha$. 
Since $\mathbf{F}$ is polynomial in $z_2$ and $z_3$, 
$\mathbf{W}=(W_2,W_3)$ and $q^2$ must be also polynomial in $z_2$ and $z_3$, 
so let us suppose that 
\begin{align} \nonumber
W_2&= \left(w_2^{00} + w_2^{10}z_2 + w_2^{01}z_3 + w_2^{20}z_2^2 + w_2^{11}z_2z_3 +w_2^{02}z_3^2 + w_2^{30}z_2^3 + w_2^{21}z_2^2z_3 + w_2^{12}z_2z_3^2 + w_2^{03}z_3^3 \right.
\\ \label{expr_W1}
& \left. \qquad {} + w_2^{40}z_2^4 + w_2^{31}z_2^3z_3 + w_2^{22}z_2^2z_3^2 + w_2^{13}z_2z_3^3 + w_2^{04}z_3^4\right)(z_2^2 + z_3^2 -1),
\\ \nonumber
W_3&= \left(w_3^{00} + w_3^{10}z_2 + w_3^{01}z_3 + w_3^{20}z_2^2 + w_3^{11}z_2z_3 +w_3^{02}z_3^2 + w_3^{30}z_2^3 + w_3^{21}z_2^2z_3 + w_3^{12}z_2z_3^2 + w_3^{03}z_3^3 \right.
\\ \label{expr_W2}
& \left. \qquad {} + w_3^{40}z_2^4 + w_3^{31}z_2^3z_3 + w_3^{22}z_2^2z_3^2 + w_3^{13}z_2z_3^3 + w_3^{04}z_3^4\right)(z_2^2 + z_3^2 -1), \\
\nonumber
q^2&= q^{00}+ q^{10}z_2 + q^{01}z_3 + q^{20}z_2^2 + q^{11}z_2z_3 + q^{02}z_3^2 + q^{30}z_2^2 + q^{21}z_2^2z_3 + q^{12}z_2z_3^2 + q^{03}z_3^3
 \\ \nonumber
&\qquad {} + q^{40}z_2^4 + q^{31}z_2^3z_3 + q^{22}z_2^2z_3^2 + q^{13}z_2z_3^3 
+ q^{04}z_3^4 + q^{50}z_2^5 + q^{41}z_2^4z_3 + q^{32}z_2^3z_3^2 
\\
&\qquad \label{expr_q}
{} + q^{23}z_2^2z_3^3 + q^{14}z_2z_3^4 + q^{05}z_3^5.
\end{align}

By substitution in (\ref{problema_W2}), we obtain a linear system with a unique solution, so 
$\mathbf{W}$ and $q^2$ are determined from the coefficients of the field $\mathbf{F}$ and 
our assumption about the form of $\mathbf{W}$ and $q^2$ was correct.

In this way we find that,
\begin{align} \label{W1}
\left\{\begin{aligned}[c]
w_2^{00}&= \frac{1}{192}\left(f_3^{11} - f_2^{04} \right) - \frac{1}{1152}f_2^{22} - \frac{1}{96}f_2^{02}, 
&  w_2^{04}&=\frac{7}{240}f_2^{04} - \frac{7}{2880}f_2^{22},
\\
 w_2^{02}&=\frac{5}{96}f_2^{02} + \frac{13}{960}f_2^{04} + \frac{31}{5760}f_2^{02} - \frac{5}{192}f_3^{11}, \quad & w_2^{11}&=-\frac{1}{24}f_3^{20},
\\
w_2^{20}&=\frac{1}{96}f_2^{02} + \frac{7}{960}f_2^{04} -\frac{11}{5760}f_2^{22} -\frac{1}{192}f_3^{11}, & w_2^{22}&=\frac{1}{480}f_2^{04} + \frac{37}{2880}f_2^{22},
\\
w_2^{40}&=\frac{1}{360}f_2^{22} - \frac{1}{480}f_2^{04}, & w_3^{00}&=-\frac{1}{96}f_3^{20},
\\
 w_3^{11}&=\frac{1}{480}f_2^{22} - \frac{1}{40}f_2^{04} - \frac{1}{24}f_2^{02} + \frac{1}{48}f_3^{11}, & w_3^{02}&=\frac{1}{96}f_3^{20},
\\ w_3^{13}&=- \frac{1}{80}f_2^{04} - \frac{1}{240} f_2^{22}, & w_3^{20}&=\frac{5}{96}f_3^{20},
\\
w_3^{31}&=\frac{1}{80}f_2^{04} - \frac{1}{60}f_2^{22},
\end{aligned}\right.
\end{align}
while 
\begin{align}\label{W2}
\left\{\begin{aligned}[c]
w_2^{01}&=w_2^{03}=w_2^{10}=w_2^{12}=w_2^{13}=w_2^{21}=w_2^{30}=w_2^{31}=0,
\\w_3^{01}&=w_3^{03}=w_3^{04}=w_3^{10}=w_3^{12}=w_3^{21}=w_3^{22}=w_3^{30}=w_3^{40}=0,
\end{aligned}\right.
\end{align}
and
\begin{align}\label{q1}
\left\{\begin{aligned}[c]
q^{10}&= \frac{1}{12}f_3^{11} - \frac{1}{6}f_2^{02} - \frac{1}{16}f_2^{04} - \frac{1}{96}f_2^{22} - f_2^{00}, & q^{03}&=\frac{1}{12}f_3^{20} - \frac{1}{3}f_3^{02},
\\
q^{12}&=\frac{3}{40}f_2^{22} - \frac{3}{20}f_2^{04} - \frac{1}{4}f_2^{02} - \frac{3}{8}f_3^{11}, & q^{01}&= -f_3^{00} - \frac{1}{6}f_3^{20},
\\
q^{14}&=- \frac{1}{16}f_2^{04} - \frac{5}{96}f_2^{22}, & q^{21}&=-\frac{1}{4}f_3^{20},
\\
q^{30}&=\frac{1}{12}f_2^{02} + \frac{1}{20}f_2^{04} - \frac{1}{3}f_2^{20} - \frac{1}{40}f_2^{22} - \frac{1}{24}f_3^{11}, & q^{32}&=\frac{1}{8}f_2^{04} - \frac{11}{48}f_2^{22},
\\
q^{50}&=\frac{11}{480}f_2^{22} - \frac{1}{80}f_2^{04} - \frac{1}{5}f_2^{40},
\end{aligned}\right.
\end{align}
while 
\begin{align} \label{q22}
q^{02}=q^{04}=q^{05}=q^{11}=q^{13}=q^{20}=q^{22}=q^{23}=q^{31}=q^{40}=q^{41}=0,
\end{align}
and $q^{00}$ is an arbitrary smooth function depending only on $t$ and on $s_1$.

\end{appendices}

\bibliographystyle{abbrv}
\bibliography{AAVFCP}

\begin{thebibliography}{10}

\bibitem{Quarteroni}
L.~Formaggia, D.~Lamponi, and A.~Quarteroni.
\newblock One-dimensional models for blood flow in arteries.
\newblock {\em Journal of Engineering Mathematics}, 47:251--276, 2003.

\bibitem{Hydon}
D.~Gammack and P.~E. Hydon.
\newblock Flow in pipes with non-uniform curvature and torsion.
\newblock {\em J. Fluid Mech.}, 433:357--382, 2001.

\bibitem{Lyne}
W.~H. Lyne.
\newblock Unsteady viscous flow in a curved pipe.
\newblock {\em J. Fluid. Mech.}, 45:13--31, 1970.

\bibitem{Paloka}
E.~Maru\v{s}i\'c-Paloka.
\newblock The effects of flexion and torsion on a fluid flow through a curved
  pipe.
\newblock {\em Appl. Math. Optim.}, 44:245--272, 2001.

\bibitem{Paloka_helical}
E.~M. Maru\v{s}i\'c-Paloka and I.~Pa\v{z}anin.
\newblock Fluid flow through a helical pipe.
\newblock {\em Z. angew. Math. Phys.}, 58:81--89, 2007.

\bibitem{Panasenko_trans}
G.~Panasenko and K.~Pileckas.
\newblock Asymptotic analysis of the non-steady {N}avier-{S}tokes equations in
  a tube structure. {I}. {T}he case without boundary-layer-in-time.
\newblock {\em Nonlinear Analysis}, 122:125--168, 2015.

\bibitem{Panasenko_trans2}
G.~Panasenko and K.~Pileckas.
\newblock Asymptotic analysis of the non-steady {N}avier-{S}tokes equations in
  a tube structure. {II}. {G}eneral case.
\newblock {\em Nonlinear Analysis}, 125:582--607, 2015.

\bibitem{Panasenko}
G.~P. Panasenko and R.~Stavre.
\newblock Asymptotic analysis of a periodic flow in a thin channel with
  visco-elastic wall.
\newblock {\em Journal de Math\'emathiques Pures et Appliqu\'ees}, 85:558--579,
  2006.

\bibitem{Pedley}
T.~J. Pedley.
\newblock Mathematical modelling of arterial fluid dynamics.
\newblock {\em Journal of Engineering Mathematics}, 47:419--444, 2003.

\bibitem{Riley}
N.~Riley.
\newblock Unsteady fully-developed flow in a curved pipe.
\newblock {\em Journal of Engineering Mathematics}, 34:131--141, 1998.

\bibitem{Smith}
F.~T. Smith.
\newblock Fluid flow into a curved pipe.
\newblock {\em Proc. R. Soc. Lond. A.}, 351:71--87, 1976.

\bibitem{Temam}
R.~Temam.
\newblock {\em Navier-Stokes Equations: Theory and Numerical Analysis}.
\newblock AMS Chelsea Publishing, 2000.

\end{thebibliography}

\end{document}